\newtheorem{theorem}{Theorem}[section]
\newtheorem*{propositionsn}{Proposition}
\newtheorem{proposition}{Proposition}[section]
\newtheorem{lemma}{Lemma}[section]
\newtheorem{corollary}{Corollary}[section]
\newtheorem{definition}{Definition}[section]
\newtheorem{remark}{Remark}[section]
\newcommand{\norm}[1]{\left\Vert#1\right\Vert}
\newcommand{\oo}{\infty}
\newtheorem*{main}{Main Theorem}
\newtheorem*{mainn}{Main Theorem'}
\newtheorem*{theoremsn}{Theorem}
\newcommand{\Fc}{\ensuremath{\mathcal{W}^c}}
\newcommand{\Fcs}{\ensuremath{\mathcal{W}^{cs}}}
\newcommand{\Fcu}{\ensuremath{\mathcal{W}^{cu}}}
\newcommand{\F}{\ensuremath{\mathcal{F}}}
\newcommand{\Ws}[1]{\ensuremath{W^s(#1)}}
\newcommand{\Wu}[1]{\ensuremath{W^u(#1)}}
\newcommand{\Wc}[1]{\ensuremath{W^c(#1)}}
\newcommand{\Wcs}[1]{\ensuremath{L^{cs}_{#1}}}
\newcommand{\Wcu}[1]{\ensuremath{L^{cu}_{#1}}}
\newcommand{\Wsl}[2][\epsilon]{\ensuremath{W^s(#2;#1)}}
\newcommand{\Wul}[2][\epsilon]{\ensuremath{W^u(#2;#1)}}
\newcommand{\dz}[1]{d_{\mathcal{C}^0}(#1)}
\begin{document}
\author{Pablo D. Carrasco}
\address{I.M.P.A., Estrada Dona Castorina 110 CEP 22360 420. Rio de Janeiro. Brazil.}
\email{pdcarrasco@gmail.com}

\title{Compact Dynamical Foliations}

\subjclass{37D30,57R30}%
\keywords{Partially Hyperbolic Diffeomorphisms,Compact Foliations, Bad Sets}%

\date{\today}

\begin{abstract}
According to the work of Dennis Sullivan, there exists a smooth flow on the 5-sphere all of whose orbits are periodic although there is no uniform bound on their periods. The question addressed in this article is whether these type of examples can occur in the partially hyperbolic context. That is, if does there exist a  partially hyperbolic diffeomorphism of a compact manifold such that all the leaves of its center foliation are compact but there is no uniform bound for their volumes. We develop tools to attack the previous question and show that it has negative answer provided that all periodic leaves have finite holonomy.
\end{abstract}

\maketitle

\section{Introduction and Main Results}
\subsection{Partial Hyperbolicity and Foliations.}

It is a remarkable fact that  many smooth dynamical systems preserve an additional geometrical structure. Often those structures come in the form of foliations, and they provide of useful tools for studying the underlying dynamics. The famous ``Hopf's method'' used to prove the ergodicity of conservative hyperbolic flows (among other systems) is a good example of the aforementioned principle. 

In this work we will be concerned with invariant foliations related to partially hyperbolic diffeomorphisms. Those are a natural generalization of the well known hyperbolic systems, where besides the expanding and contracting directions one allows some \emph{center} directions with some domination relation with respect the other ones.  The presence of these center directions permits a very general type of structure, which makes the study of partial hyperbolic systems considerably harder than the study of its hyperbolic counterpart. Due to this generality however, partial hyperbolic systems appear naturally in different branches of mathematics, which together with the beauty of their complexity makes their study one of the most active research areas in dynamical systems today. To carry on the discussion we give the precise definition.

\begin{definition}
Let  $M$\ be a  closed compact manifold. A $\mathcal{C}^1$\ diffeomorphism $f: M \rightarrow  M$\ is \emph{partially hyperbolic} if there exist a continuous splitting of the tangent bundle into a Whitney sum of the form  $$TM=E^u\oplus E^c\oplus E^s$$\ where all bundles are $df$-invariant, the bundles $E^s$\ and $E^u$\ have positive dimension, and a Riemannian metric $\norm{\cdot}$ on $M$ with the properties:

\begin{enumerate}
 \item  For all $x\in M$, for all unit vectors $ v^{\sigma }\in\ E^{\sigma }_x$ $(\sigma=s,u,c)$ $$\norm{d_xf(v^s)}<\norm{d_xf(v^c)}<\norm{d_xf(v^u)}.$$
 \item  $\max_{x\in M}\{\norm{d_xf|E^s}\}\leq \lambda<1< \mu=\min_{x\in M} \{\norm{(d_xf|E^u)^{-1}}\} $.
\end{enumerate}
\end{definition}

The bundles $E^s,E^u,E^c$\ are the \emph{stable, unstable} and \emph{center} bundle respectively. We also define the bundles $E^{cs}=E^c\oplus E^s$\ and $E^{cu}=E^u\oplus E^c$, the \emph{center stable} and \emph{center unstable} bundles.  When $E^c=\{0\}$\ the map $f$\ is called \emph{completely hyperbolic} or \emph{Anosov}. The set of partially hyperbolic maps on a manifold $M$ is $\mathcal{C}^1$ open in $Diff^1(M)$. See Theorem 2.15 in \cite{HPS}.

\begin{remark}
In the usual definitions of partial hyperbolicity, the inequality $(1)$ in definition 1.1 holds for some iterate $f^N$, where $N>0$ is independent of $x$.  The fact that we can assume $N=1$ is due to the existence of adapted metrics for partially hyperbolic systems, due to \cite{AdaptedMet}.
\end{remark}

We refer the reader to \cite{PesinLect} and \cite{PartSurv} for a more throughout introduction to partial hyperbolicity, and for the discussion of various examples. As this article refers to the properties of certain foliations, this is a good point to recall the definition. Denote by $I^k=(-1,1)^k$.

\begin{definition}
Let $M$ be a $m$ dimensional manifold. A partition $\mathcal{F}=\{L_x\}_{x\in M}$\ of $M$ is called a $\mathcal{C}^{r,s}$ foliation of codimension $q$ if there exist an open covering $\mathcal{U}=\{U\}$ of $M$ and a family of continuous functions $\{\psi_{U}:(-1,1)^q\rightarrow Emb^r((-1,1)^p,M)\}_U$\ with the following properties:
\begin{enumerate}
\item Each atom $L_x$\ of $\mathcal{F}$ is a $p=m-q$ dimensional immersed submanifold of $M$ of class $\mathcal{C}^r$ which contains $x$: these atoms are called the \emph{leaves} of the foliation. For $U\in\mathcal{U}$\ and $x\in M$, the connected component of $L_x\cap U$ containing $x$  is called a \emph{plaque} through $x$ and is denoted by $P_x$. Note that the concept of plaque depends (in principle) of $\mathcal{U}$.
\item If $x\in U$ then there exist unique $a\in I^q, b\in I^p$ 	such that $\psi_{U}(a)(b)=x$, and furthermore $Im(\phi_U(a))=P_x$.
\item If $U\cap U'\neq \emptyset$, consider the map $\phi_{U',U}:I^q\rightarrow I^q$ defined by
$$x=\psi_{U}(a)(b)=\psi_{U}(a')(b')\Rightarrow \phi_{U',U}(b)=b'.$$
Then $\phi_{U',U}$\ is of class $\mathcal{C}^s$.
\end{enumerate}
The number $s$\ is the \emph{transverse regularity} of the foliation. If $s\geq 1$ the foliation is said to be differentiable. The sets $U$ are called \emph{foliation boxes}.

A subset $A\subset M$ is called saturated if consists of whole leaves.
\end{definition}

This definition is equivalent to the one given by means of \emph{foliation charts}, as presented for example in \cite{FoliationsI}. Sometimes $\mathcal{C}^{r,0}$ foliations are also called \emph{laminations} in the literature. Compare Section $5$ in \cite{HPS} and Section $4$ in \cite{PesinLect}.

The classical \emph{Stable Manifold Theorem}, which we cite below, shows the existence of invariant foliations (meaning that $f$ permutes their corresponding leaves) tangent to the bundles $E^s$ and $E^u$. However, the existence of an (invariant) foliation tangent to $E^c$ cannot be guaranteed in general, as the example in \cite{DiffDyn} shows (see also \cite{Nodyncoh}). Nonetheless, the case when such a foliation exists is of particular interest since not only it provides a mechanism to study the dynamics, but also this is the case for the majority of known examples. Some partial results in this matter can be found in \cite{WeakFol},\cite{Dyncoher} and \cite{Tranph}. 

\begin{theorem}[Stable Manifold Theorem]
If $f\in Diff^r(M)$\ is partially hyperbolic then there exist $\mathcal{C}^{r,0}$ foliations $\mathcal{W}^s=\{W^s(x)\}_{x\in M},\mathcal{W}^u=\{W^u(x)\}_{x\in M}$\ tangent to $E^s,E^u$ respectively called the \emph{stable} and the \emph{unstable} foliations. The leaves of these foliations are homeomorphic to Euclidean spaces of the corresponding dimension.  
\end{theorem}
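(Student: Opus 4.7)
The plan is to follow the classical Hadamard–Perron graph transform strategy, carried out uniformly in the basepoint. I only treat the stable foliation; the unstable one follows by replacing $f$ with $f^{-1}$. Pick an adapted metric (which exists by Gourmelon's result cited in the remark), and use the exponential map of $M$ to transfer the dynamics, in a uniform neighborhood of the zero section of $TM$, to a family of ``fiber maps'' $\tilde f_x: E^s_x\oplus E^{cu}_x\to E^s_{f(x)}\oplus E^{cu}_{f(x)}$ sending the origin to the origin. By continuity of the splitting and the inequalities in the definition of partial hyperbolicity, in a uniformly small ball these fiber maps are $\eps$-close (in the $\mathcal{C}^1$ sense) to the block-diagonal linear map $d_xf|E^s\oplus d_xf|E^{cu}$, whose first factor is a $\lambda$-contraction and whose second factor has conorm bounded below by a constant strictly greater than $\lambda$.

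Next I would set up the graph transform $\Gamma$ on the complete metric space of $1$-Lipschitz sections $\sigma:D^s_x\to E^{cu}_x$ with $\sigma(0)=0$, where $D^s_x\subset E^s_x$ is a small disk of uniform radius. The cone domination guarantees that the image of a graph under $\tilde f_x$ is again a graph over a disk in $E^s_{f(x)}$ that contains $D^s_{f(x)}$, so pulling back by $\tilde f_x^{-1}$ gives a well-defined operator. A direct estimate, using the ratio of the stable contraction to the center-unstable conorm, shows $\Gamma$ is a contraction in the $\mathcal{C}^0$-metric; its unique fixed point $\sigma_x$ produces the local stable manifold $\Wsl{x}=\operatorname{graph}(\sigma_x)$. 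Smoothness along the leaf (class $\mathcal{C}^r$) comes from the $\mathcal{C}^r$-section theorem of \cite{HPS} applied to the induced action on jet bundles, using only the one-sided rate inequality $\lambda<1$; continuity of $\sigma_x$ in $x$ gives the transverse $\mathcal{C}^{0}$ regularity.

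To globalize, I define
\[
\Ws{x}=\bigcup_{n\geq 0}f^{-n}\bigl(\Wsl{f^n(x)}\bigr).
\]
Invariance $f(\Wsl{x})\subset\Wsl{f(x)}$ (which follows from the graph-transform construction together with the $\lambda$-contraction of $\tilde f_x$ on $E^s$) shows the union is increasing, and that each leaf $\Ws{x}$ is a nested union of $\mathcal{C}^r$-embedded open disks of dimension $\dim E^s$, hence homeomorphic to $\mathbb{R}^{\dim E^s}$ by the standard Brown/open-disk union argument. Checking the foliation axioms reduces to verifying that near any $x$ one can parametrize the family $\{\Wsl{y}\}_{y\in T}$ for $T$ a small transversal by the continuous map $(y,b)\mapsto\exp_y(b,\sigma_y(b))$, which is a local homeomorphism onto a neighborhood and gives foliation charts whose transition maps are continuous.

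The main obstacle is the uniformity required for the graph transform to act on one and the same function space: one needs uniform estimates on the size of the domain $D^s_x$, on the Lipschitz constants, and on the contraction ratio of $\Gamma$. These follow from compactness of $M$, continuity of the splitting, and the adapted metric, but assembling them correctly is the technical heart of the argument. Beyond that, promoting leaf-smoothness to $\mathcal{C}^r$ (rather than only $\mathcal{C}^1$) uses the iterated-jet version of the graph transform and is the most delicate step; this is exactly the content of the invariant-section theorem in \cite{HPS}, which I would quote rather than reprove.
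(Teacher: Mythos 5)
The paper gives no proof of this statement: it simply cites Theorem~4.1 of \cite{HPS} and moves on. Your sketch is a correct rendering of the Hadamard--Perron graph-transform argument that the cited source actually carries out — adapted metric, fiber maps via the exponential, contraction of the graph transform on Lipschitz sections, globalization as the increasing union $\bigcup_{n\geq 0} f^{-n}\Wsl{f^n x}$, the Brown open-disk-union argument for the topology of the leaf, and the $\mathcal{C}^r$-section theorem on jet bundles for leaf regularity — so your approach agrees with what the reference does and nothing further is required.
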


See \cite{HPS}, Theorem $4.1$. We point out that the foliations $\mathcal{W}^s,\mathcal{W}^s$\ are seldom differentiable (\cite{AnosovThesis}, pag. 201); their transverse regularity is only H\"older in general \cite{HolFol}.

In this paper we will discuss the case where the center bundle is tangent to an invariant foliation $\mathcal{W}^c$ such that all its leaves are compact; in other words, $\mathcal{W}^c$ is a \emph{compact foliation}.  The prototypical example in this situation is the well known (and extensively researched\footnote{For more information on skew products see for example: \cite{StErgSkew}, \cite{StErgSkew2}, \cite{Tranph}, \cite{TopGroupExtC}, \cite{StErgSkewExt} and \cite{Patho}. }) class of \emph{skew products} and their perturbations. In those cases the structure of the center foliation is simple: it is a trivial fibration by compact leaves. One is led to ask about the similarities and differences of a  general partial hyperbolic system with compact center foliation with respect to skew products, and in particular whether the center foliation has a simple form.

\subsection{Main Result.}

It will be explained in Section $2$ that for a compact foliation (on a compact manifold) the most important property is the existence a uniform upper bound for the Riemannian volume of the leaves.

\begin{definition}
A compact foliation $\mathcal{F}$ on a compact Riemannian manifold $M$\  is called \emph{uniformly compact} if the function $vol:M\rightarrow \mathbb{R}_{+}$\ which assigns
to each point $x$\ the Riemannian volume of the submanifold $L_x\subset M$\ is uniformly bounded from above, i.e.
$$\sup\{vol(L):L\text{ leaf of }\mathcal{F}\}<\infty.
$$
\end{definition}

The striking counterexample of D. Sullivan \cite{CounPer} shows that not every compact foliation is uniformly compact. If the foliation is uniformly compact then its local structure is simple (cf. \cite{FolCpct} and Section $2$), and this permits some hope for classifying them. On the other hand, it is known that for non uniformly compact foliations the geometrical possibilities are very intricate. See \cite{MalFlujo}.

We now state our main theorem. We will work with the natural class of dynamically coherent diffeomorphisms.

\begin{definition}\label{dyncoh}
A partially hyperbolic diffeomorphism is \emph{dynamically coherent} if there exist $f$-invariant $\mathcal{C}^{1,0}$ foliations $\Fc=\{L_x\}_{x\in M},\Fcs=\{L^{cs}_x\}_{x\in M},\Fcu=\{L^{cu}_x\}_{x\in M}$\ tangent to $E^c,E^{cs}$\ and $E^{cu}$\ respectively, and such that for every $x\in M$, $L_x=L^{cs}_x\cap\ L^{cu}_x$.
\end{definition}

The foliations $\Fcs,\Fcu$ are the \emph{center stable} and \emph{center unstable} foliations. We point out that all known examples where the bundle $E^c$\ is integrable are dynamically coherent. See \cite{stableerg}, \cite{Dyncoher} and compare \cite{Nodyncoh}.

\vspace{0.2cm}

\textbf{Convention:} Every partially hyperbolic diffeomorphism considered in this paper will be dynamically coherent.

\vspace{0.2cm}

\begin{main}
Let $f$\ be a partially hyperbolic diffeomorphism whose center foliation \Fc\ is compact. Suppose that every $f$-periodic center leaf $L_x$\ has neighborhoods $A_x\subset L^{cs}_x, B_x\subset L^{cu}_x$ so that $vol|A_x,vol|B_x$ are bounded from above. Then \Fc\ is uniformly compact.
\end{main}

It will explained later that for a compact foliation, the fact that the function $vol$ is locally bounded near a leaf is equivalent to such a leaf having finite holonomy. This is discussed in Section $2$ where, for convenience of the reader, we also recall the concept of holonomy for a general foliation. Hence, the main theorem can be also phrased as:

\begin{mainn}\label{thmA}
Let $f$\ be a partially hyperbolic diffeomorphism whose center foliation \Fc\ is compact. Then \Fc\ is uniformly compact if and only if every $f$-periodic center leaf has finite holonomy.
\end{mainn}

The advantage of the latter formulation is two-fold:  it expresses an analytical fact (boundedness of the function $vol$) in a purely topological-geometrical fashion and moreover, it reveals the relation with the dynamics. The Main Theorem' can be used with the following reduction.

For a center leaf $L_x$, its holonomy group can be studied by considering the holonomy groups of $L_x$ inside $L^{cs}_x$ and $L^{cu}_x$. We will denote by $G^s_x,G^u_x$ the corresponding holonomy groups of $L_x$ when considered as part of the foliations $\Fc|L^{cs}_x,\Fc|L^{cu}_x$ respectively.

\begin{definition}
The groups $G^s_x,G^u_x$ are called the \emph{stable holonomy group} and the \emph{unstable holonomy group} of the leaf $L_x$.
\end{definition}

Then we have the following useful Proposition, which will be proved in the next section (see Proposition \ref{producto}).

\begin{propositionsn}
For every $L_x\in\Fc$, its holonomy group is finite if and only if both groups $G^s_x,G^u_x$ are finite.
\end{propositionsn}

One can represent the stable and  unstable holonomy groups of $L_x$ by germs of maps defined (locally) in the corresponding strong manifold passing through $x$ (either $\Ws{x}$ or $\Wu{x}$). If we assume further that $L_x$ is $f$-periodic, it is natural to conjugate with the dynamics to try to show that these holonomy maps are globally defined on the complete stable or unstable manifold, and not only on a local transversal as is usually the case for a general foliation. This fact, together with the Main Theorem' provides a substantial simplification for the study of compact center foliations.  To illustrate the technique, we consider the concept of \emph{completeness} of the foliation \Fc.

\subsection{Completeness.}

As far as the author knows, the concept of completeness in partially hyperbolic dynamics was first considered by C. Bonatti and A. Wilkinson in \cite{Tranph}. To give the definition, consider (for $f:M \rightarrow M$ partially hyperbolic) a leaf $L_x\in\Fc$\ and define the sets
$$\Ws{L_x}=  \bigcup_{y\in L_x}\Ws{y}$$
$$\Wu{L_x}=\bigcup_{y\in L_x}\Wu{y}.$$

It follows by dynamical coherence that $\Ws{L_x}\subset L^{cs}_x,\Wu{L_x}\subset L^{cu}_x$ are (relatively) open subsets saturated by the corresponding strong foliation. The condition of being saturated by the center foliation however, is much more subtle.

\begin{definition}\label{defcomp}
The submanifolds $\Ws{L_x}$\ and $\Wu{L_x}$\ are said to be \emph{complete} if they are saturated by the center foliation. The center foliation is complete if for every center leaf $L_x$\ the submanifolds $\Ws{L_x}$\ and $\Wu{L_x}$\ are complete.
\end{definition}

Completeness of $\Ws{L_x}$\ and $\Wu{L_x}$ is the same as metric completeness inside $L^{cs}_x,L_x^{cu}$ respectively (cf. Prop. \ref{completo}). We then have the useful criterion to prove uniform compactness of \Fc.

\begin{theorem}\label{thmB}
Let $f$\ be a partially hyperbolic diffeomorphism whose center foliation \Fc\ is compact and complete. Then \Fc\ is uniformly compact. 
\end{theorem}

This is direct consequence of the Main Theorem' together with the following Proposition.

\begin{proposition}\label{bon}
Consider $f$ partially hyperbolic with compact center foliation \Fc. If $L_x$ is an $f$-periodic center leaf such that $\Ws{L_x}$ is complete, then $G^s_x$ is finite.
\end{proposition}

\begin{proof}
As $L_x$ is $f$-periodic, a simple classical observation shows that a strong stable manifold can intersect $L_x$ at most once. By completeness and the previous remark, any center leaf $L_y\subset L^{cs}_x$ is a topological covering of $L_x$, where the projection $P_{L_y}:L_y\rightarrow L_x$ is given by $P_{L_y}(z)=\Ws{z}\cap L_x$. Since $L_x,L_y$ are compact, these covers are in fact finite. Suppose that the loop $[\alpha]\in \pi_1(L,x)$ yields a holonomy element represented by a map $h: D\rightarrow \Ws{x}$, where $D\subset \Ws{x}$ is a disc centered at $x$. For any $y\in \Ws{x}$ the lift of $\alpha$ to $L_y$ only depends on $[\alpha]$, and this readily implies that $h$ can be extended continuously to $\Ws{x}$. Hence, the holonomy group of $L_x$ is represented by a group $G$ of (globally defined) homeomorphisms of $\Ws{x}$, and furthermore the orbits of the action of $G$ on $\Ws{x}$ are finite, as was remarked before. The proof concludes by using the following theorem of D. Montgomery \cite{PoinPer} (see \cite{FolCpct} for the version below).
\end{proof}

\begin{theorem}\label{Montgomery}
Suppose that $G$ is a group acting effectively by homemorphisms on a connected manifold for which every orbit is finite. Then $G$\ is finite. 
\end{theorem}

\begin{remark}
As the holonomy is $L_x$ globally defined, it follows that $\Ws{L_x}$ is the suspension of the holonomy group (see Section $2$). This was observed by C. Bonatti, who also kindly provided me with a simpler proof of Theorem \ref{thmB} than my original one.
\end{remark}

We give below another example of the use of the Main' Theorem.

\subsection{Lyapunov Stability}

The proof of Theorem \ref{thmB} contains a useful technique to study compact center foliations, namely:
\begin{enumerate}
	\item first establish that if $L_x\in\Fc$ is $f$-periodic (or with no loss of generality, fixed) then $G^s_x$ is represented by homeomorphisms of $\Ws{x}$ which are globally defined, and then
	\item prove that for any center leaf $L_y\subset L^{cs}_x$, the number of intersections of $L_y\cap \Ws{x}$ is finite. 
\end{enumerate}

The first part can be attacked with the help of the dynamics. To exemplify, start noting that since $\pi_1(L_x,x)$ is finitely generated, given $s>0$ there exist finitely many homotopy classes $[\alpha]\in \pi_1(L_x,x)$\ with representatives whose length is less than $s$. For convenience define the length of a class $[\alpha]\in \pi_1(L_x,x)$ as
$$
\inf\{\text{length}(\beta):[\beta]=[\alpha], \beta\text{ rectifiable loop}\}.
$$
The previous observation implies that given $s>0$ there exists an open disc $D(x;r_x)\subset \Ws{x}$ of center $x$ and radius $r_x$, and such that if a class of loops $[\alpha]$ has length less than $s$, then it induces a holonomy element represented by a map $h_{[\alpha]}: D(x;r_x)\rightarrow \Ws{x}$. Using compactness of $L_x$, the size of $D(x;r_x)$ can be taken uniform in $L_x$, i.e. for $s>0$ any $y\in L_x$ has a corresponding disc $D(y,r_y)\subset \Ws{y}$ as above and $\inf\{r_y:y\in L_x\}>0$.

To extend $h_{[\alpha]}$ to a larger disc, and since distances along stable manifolds contract, one can conjugate with $f$ and define its extension as
$$
f^{-1}\circ h_{[f\alpha]} \circ f.
$$
The difficulty is that $[f\alpha]$ can have length bigger than $s$, and thus the domain of definition of $h_{[f\alpha]}$ could become significantly smaller. On the other hand, if the length of $\{[f^n\alpha]\}_{n\in \mathbb{N}}$ remains bounded the previous argument yields a global extension of $h_{[\alpha]}$. This is the case, for example, of one dimensional foliations. Another of such cases is the following.

\begin{definition} A partially hyperbolic diffeomorphism $f:M\rightarrow M$ is called Lyapunov stable (in the center direction) if given $\epsilon>0$ there exists $\delta>0$ such that for every piecewise $\mathcal{C}^1$ curve $\alpha$ tangent to $E^c$ such that $\text{length}(\alpha)<\delta$ it holds
$$
\forall n\geq 0, \quad \text{length}(f^n\alpha)<\epsilon.
$$
\end{definition}

It is a result of F. Rodriguez-Hertz, J. Rodriguez-Hertz and R. Ures  (Corollary 7.6 in \cite{PartSurv}: see also Theorem 7.5 in \cite{HPS}) that if $f$ and $f^{-1}$ are Lyapunov stable then the bundles $E^{cs}, E^{cu}$ are integrable. If moreover the bundle $E^c$ is assumed to integrate to an invariant foliation then $f$ is dynamically coherent.

\begin{theorem}\label{Lya}
Let $f:M\rightarrow M$\ be  partially hyperbolic diffeomorphism with invariant compact center foliation \Fc\ and such that $f$ and $f^{-1}$ are Lyapunov stable. Then \Fc\ is uniformly compact.
\end{theorem}

\begin{proof}
Consider a leaf $L_x\in \Fc$ which is fixed by $f$, and a holonomy element represented by a map $h_{[\alpha]}: D(x;r_x)\rightarrow \Ws{x}$ as discussed above, where $\alpha$ is piecewise $\mathcal{C}^1$. Let $\delta>0$ be the number given in the definition of Lyapunov stability which corresponds to $\epsilon=1$ and consider $k>0$ such that $\alpha$ can be partitioned in $k$ curves of length less than $\delta$. Then for every $n\geq 0$ we have that $\text{length}(f^n\alpha)<k$, and as was previously remarked this implies that $h_{[\alpha]}$ can be extended to $\Ws{x}$. It remains to show that  $|L_y\cap \Ws{x}|<\oo$ for any $y\in L^{cs}_x$.

Assume by contradiction that there exist infinitely many points  $(y_i)_i$ in $L_y\cap \Ws{x}$ and take a $\mathcal{A}$ covering of $L_x$ by foliation boxes from \Fc. Let $\epsilon>0$ be much smaller than the sizes of the plaques of $\mathcal{A}$ and consider $\rho$ the number associated to $\epsilon$ given in the the definition of Lyapunov stability. Since $L_y$ is compact there exist  $y_i,y_j\in L_y\cap W$ satisfying $d(y_i,y_j)<\rho$. Join these points by a $\mathcal{C}^1$ curve $\beta$ in $L_y$ and observe that under iterations these points approach $L_x$. But for a high enough iterate the points $f^n(y_i),f^n(y_j)$ are in different plaques of $f^n{L_y}$ implying that the curve $f^n\beta$ has length bigger than $\epsilon$. This is a contradiction by definition of $\rho$.
\end{proof}

Lyapunov stability is satisfied for example when there exists some constant $C>0$ such that
$$
\forall n\geq 0,\ \frac{1}{C}\leq m(df^n|E^c)\leq \norm{df^n|E^c}\leq C.
$$

\begin{definition}
A  partially hyperbolic diffeomorphism $f$\ is called \emph{center isometric} if for every $x\in M$, for every $v\in E^c_x$\
$$\norm{d_xf(v)}=\norm{v}.$$
\end{definition}

In this case $f$\ is dynamically coherent \cite{OnDynCoh}. From the previous observation and Theorem \ref{Lya} we deduce.

\begin{theorem}
Let $f:M\rightarrow M$\ be a center isometric partially hyperbolic diffeomorphism whose center foliation \Fc\ is compact. Then \Fc\ is uniformly compact.
\end{theorem}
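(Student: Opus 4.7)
The plan is to deduce this as an immediate corollary of Theorem D, so the only work is verifying that the hypotheses of Theorem D hold. We need two things: invariance and compactness of $\Fc$ (given by assumption), and Lyapunov stability of both $f$ and $f^{-1}$ in the center direction.

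First I would observe that the center isometric hypothesis trivially passes to $f^{-1}$: since $E^c$ is $df$-invariant and $d_xf|E^c_x$ is a linear isometry onto $E^c_{f(x)}$, its inverse $d_{f(x)}f^{-1}|E^c_{f(x)}$ is also an isometry. Hence both $f$ and $f^{-1}$ satisfy $\norm{d_xf^{\pm 1}(v)}=\norm{v}$ for all $v\in E^c$.

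Next I would verify Lyapunov stability directly from the definition. Given any $\mathcal{C}^1$ curve $\alpha:[0,1]\to M$ tangent to $E^c$, the curve $f^n\circ\alpha$ is again tangent to $E^c$ (by $df$-invariance of $E^c$) and
\[
len(f^n\alpha)=\int_0^1\norm{d_{\alpha(t)}f^n(\alpha'(t))}\,dt=\int_0^1\norm{\alpha'(t)}\,dt=len(\alpha),
\]
since $d_xf^n|E^c$ is an isometry for every $n$ (this is simply the observation recorded in the excerpt with $C=1$). Thus given $\epsilon>0$ the choice $\delta=\epsilon$ works in the definition of Lyapunov stability for every iterate $n\geq 0$, and the same calculation applied to $f^{-1}$ shows $f^{-1}$ is Lyapunov stable as well.

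Finally, the cited result \cite{OnDynCoh} gives dynamical coherence of $f$ under the center isometric hypothesis, so in particular the invariant foliation $\Fc$ is part of a dynamically coherent structure. (Alternatively, Lyapunov stability of $f^{\pm 1}$ together with the integrability of $E^c$ to $\Fc$ already yields dynamical coherence, as noted in the discussion preceding Theorem D.) Applying Theorem D then concludes that $\Fc$ is uniformly compact. There is no real obstacle here: the argument is essentially a one-line reduction, and the substance of the result is entirely contained in Theorem D (and hence ultimately in Theorem A). The only point requiring any care is checking that the center isometric condition is symmetric in $f$ and $f^{-1}$, which as noted is immediate.
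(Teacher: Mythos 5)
Your argument is correct and is essentially the paper's own: the paper's proof is the one-line ``From the previous observation and Theorem D we deduce,'' where the ``previous observation'' is that the boundedness condition $\frac{1}{C}\le m(df^n|E^c)\le\norm{df^n|E^c}\le C$ (with $C=1$ here) implies Lyapunov stability, and the citation to \cite{OnDynCoh} handles dynamical coherence. Your write-up merely spells out the length computation and the symmetry in $f\leftrightarrow f^{-1}$, which is exactly what the paper leaves implicit.
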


The organization of the rest of the article is as follows. In the next section we discuss generalities of compact foliations, and in particular we study the concept of holonomy giving the necessary background for the proof of the main result. This is done in the third section. We finish the article with some remarks about uniformly compact center foliations. An appendix containing a proof of the general version of Reeb's Stability Theorem needed for the arguments used in the third section is also included.

\section{Compact Foliations}

In this section we study some properties of compact foliations which will be needed for the proof of the main result. We start by reviewing the important concept of holonomy together with various equivalent conditions for a compact foliation to be uniformly compact. Then we recall the concept of \emph{Epstein's filtration of Bad Sets}, which plays a central role in the proof of our Main Theorem. In the last two parts we specialize to the case when the foliation is the central foliation of a partially hyperbolic diffeomorphism.

\subsection{Holonomy and Compact Foliations.}

Throughout this part $M$ is $m$-dimensional compact manifold, $\mathcal{F}$ is a codimension $q$ compact foliation of class $\mathcal{C}^{r,0}$ on $M$ and $E \subset M$ is a locally compact saturated set.

Fix a leaf $L\in  \mathcal{F}$. Since the leaves of $\F$ are differentiable, there exist a $q$-disc bundle $p_L:W_{L}\rightarrow L$ such that
$$
\forall x\in L, D(x)=p_L^{-1}(x) \text{ is an open } q\text{-disc transverse to the leaves of } \mathcal{F}.
$$

If $y\in W_L$ then the map $p_L|W\cap L_y\rightarrow L$\ is a local embedding, and thus by continuity if $y$ is sufficiently close to $L$ then $p_L(W\cap L_y)=L$. From this one can deduce the following (see Proposition 7.1 in \cite{FolCpct} or Proposition 4.1 in \cite{FolCpctForm}).

\begin{lemma}\label{coveringleaves}
If $x\in L$ then for every integer $n\geq 1$ there exist an open neighborhood  $V$ of $x$ such that if $y\in V$ then either
\begin{enumerate}
\item $p: V\cap L_y\rightarrow L$  is more than $n$-to one, or
\item $p: V\cap L_y\rightarrow L$  is a $k$-to one covering for some $1\leq k\leq n$.
\end{enumerate}
\end{lemma}

\begin{corollary}[D.B.A. Epstein]\label{volumeEpstein}
If $x\in L$ then for every integer $n\geq 1$, for every $\epsilon>0$ there exist an open neighborhood  $V$ of $x$ such that if $y\in V$ then either
\begin{enumerate}
\item $vol(L_y)>n vol(L)$, or
\item $|vol(L_y)-k\cdot vol(L)|<\epsilon$ for some $1\leq k\leq n$.
\end{enumerate}

In particular $vol|E$ is lower semi-continuous.
\end{corollary}

Consider a continuous loop $\alpha: [0,1]\rightarrow L_x$ such that $\alpha(0)=x=\alpha(1)$. If $y\in D(x)$\ is sufficiently close to $x$, by Lemma \ref{coveringleaves} one can lift $\alpha$ to a path $\widehat{\alpha}_y$ in $L_y$ such that $\widehat{\alpha}_y(0)=y$. This procedure defines a homeomorphism $h: O_h(x)\subset D(x)\rightarrow D(x)$ where $O_h(x)$ is an open disc centered at $x$ by setting $h(y):= \widehat{\alpha}_y(1)$. Standard arguments in Foliation Theory (cf. \cite{FoliationsI}, chap. 2) show that the germ $germ_x(h)$ of $h$ at $x$  only depends on the homotopy class of $\alpha$ in $\pi_1(L_x,x)$.

\begin{definition}
The holonomy group of $L_x$ at $x$ is the group of germs
$$
G_x=\{germ_x(h):h: O_h(x)\subset D(x)\rightarrow D(x)\}
$$

For points $x\in E$, the restricted holonomy group $G_x|E$ is defined similarly using the germs of the maps $h|E$.
\end{definition}
For another (equivalent) definition of the holonomy group see \cite{HolFolRev}.

It can be shown that changing the basepoint $x$ to another point in the same leaf or changing the transversal $D(x)$ to another smooth transversal $T(x)$ lead to conjugated holonomy groups. For this reason sometimes the holonomy group of a leaf is identified with a subgroup of the group of germs at zero of $\mathbb{R}^q$.

\begin{definition}
Let $x\in E$. The leaf $L_x$ is said to have finite holonomy (trivial holonomy) in $E$ if the corresponding holonomy group $G_x|E$ is finite (trivial).
\end{definition}

If a leaf $L_x$ has finite holonomy then its local structure is simple. For a set $S$\ let $|S|\in\mathbb{N}\cup\{\oo\}$\ denote its number of elements.

\begin{theorem}[Reeb's Stability Theorem]\label{reebpotente}
Let $E$ be a locally compact saturated set of a foliated manifold $M$. Let $x\in E$ such that its leaf $L_x$ is compact and assume that $G_x|E$ is finite. Then there exist arbitrarily small relatively open sets $U\subset E$ such that
\begin{enumerate}
\item $U$ is saturated.
\item For every $y\in U$ the map $p|:L_y\rightarrow L_x$ is a covering map with less than equal $|G_x|E|$ number of sheets. In particular for every $y\in U$ the group $G_y|E$ is finite.
\end{enumerate}
\end{theorem}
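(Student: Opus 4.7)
The plan is to apply Lemma \ref{coveringleaves} with $n=m:=|G_x|E|$ in order to rule out the ``more than $m$-sheeted'' alternative for leaves in $E$ near $L_x$, and then to patch the resulting local information into a saturated relatively open subset of $E$ obtained as the saturation of a small disc on the transversal $D(x)$.

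I would begin by fixing $m:=|G_x|E|$ and picking representatives $h_1=\mathrm{id},h_2,\ldots,h_m$ of the elements of $G_x|E$ on a common open disc $D_0\subset D(x)$. Since $G_x|E$ is a group, after shrinking $D_0$ I may arrange that for every $i,j$ the composition $h_i\circ h_j|E$ coincides with the chosen representative of $[h_i]\cdot[h_j]$ as a germ on $D_0\cap E$. I would then choose a smaller relatively open neighborhood $V\subset D_0\cap E$ of $x$ with $h_i(V)\subset D_0\cap E$ for every $i$; only finitely many germ conditions need to be satisfied, so this is possible.

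Next I would cover $L_x$ by finitely many foliation boxes $\mathcal{U}_1,\ldots,\mathcal{U}_N$ with transversals $D_j$ at points $x_j\in L_x\cap\mathcal{U}_j$ (with $x_1=x$, $D_1=D(x)$), and apply Lemma \ref{coveringleaves} at each $x_j$ with $n=m$. The crucial step is to exclude the first alternative at $x_1=x$: for $y\in V$ sufficiently close to $x$, the points of $L_y\cap D(x)\cap E$ near $x$ are obtained by lifting loops in $L_x$ based at $x$, and by definition of $G_x|E$ two loops with the same restricted holonomy germ produce the same endpoint. Hence there are at most $m$ such intersection points, contradicting the ``more than $m$-sheeted'' alternative. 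Consequently $p_L|L_y:L_y\to L_x$ is a $k$-to-one covering for some $k\leq m$, and by compactness of $L_x$ together with path-lifting, $L_y$ is itself a compact leaf contained in $W_L$.

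Finally, I would set $U:=sat(V)$ with the saturation taken inside $E$ and check that $U$ is relatively open in $E$ using the local product structure of $\mathcal{F}$ together with the transversality of $D(x)$. Statement (1) is built in; (2) follows from the covering-degree bound; and $G_y|E$ is finite for $y\in U$ because any loop in $L_y$ projects to a loop in $L_x$ whose restricted holonomy lies in $G_x|E$, so $G_y|E$ injects into $G_x|E$ modulo the (finite) deck transformation group of the covering $L_y\to L_x$. Arbitrarily small $U$ are obtained by shrinking $V$. The principal obstacle is the ruling out of the ``more than $m$-sheeted'' case in Lemma \ref{coveringleaves}: this is the place where finiteness of $G_x|E$ enters essentially, and the argument requires carefully identifying intersection points of nearby leaves with evaluations of the chosen holonomy representatives.
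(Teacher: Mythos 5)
Your setup of the representative group $\{h_1,\dots,h_m\}$ on a common shrunk domain $V\subset D(x)\cap E$ is exactly the content of the preliminary Lemma (Prop 4.1, III in \cite{FolCpctForm}) that the paper establishes first, so that part matches. The gap is in the step where you rule out the ``more than $m$-to-one'' alternative of Lemma \ref{coveringleaves}: you assert that the points of $L_y\cap D(x)\cap E$ near $x$ ``are obtained by lifting loops in $L_x$ based at $x$,'' and therefore number at most $m$. That assertion requires that any two such intersection points be joined by a path in $L_y$ that remains inside the tubular neighbourhood $W_{L_x}$, so that it projects under $p$ to a genuine loop in $L_x$ along which holonomy transport is defined and lands back in $V$. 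But a priori a leaf $L_y\subset E$ --- even a compact one --- could leave $W_{L_x}$ and re-enter it, producing intersection points with $D(x)$ that are not reached by holonomy transport from a fixed base point; this ``escape'' is exactly the phenomenon that Reeb stability is meant to forbid, so invoking it is circular. Equivalently, you are implicitly using that finite restricted holonomy forces nearby leaves in $E$ to stay trapped in $W_{L_x}$, which is the nontrivial content of Proposition \ref{voleqhol} and cannot be taken for granted here.

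The paper's appendix avoids this issue by never trying to control intersection counts directly. After the Lemma produces $H_V\subset\mathrm{Homeo}_x(V)$ representing $G_x|E$, it builds the suspension $\widehat{L}\times_{H_V}V$ and constructs an explicit map $k:\widehat{L}\times V\to U=\mathrm{sat}(V)\cap E$ by holonomy lifting along paths \emph{in} $L_x$; surjectivity, openness, and descent to a homeomorphism $K:\widehat{L}\times_{H_V}V\to U$ are then checked, and the covering-degree bound and saturation of $U$ come for free from the suspension structure rather than from an a priori bound on how many times $L_y$ can return. To repair your argument you would either need to replicate the suspension construction, or supply a separate compactness-and-connectedness argument (covering $L_x$ by plaques and propagating the intersection bound) that verifies no leaf of $E$ meeting a small enough $V$ can leave $W_{L_x}$; as written, the key step is not justified.
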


The version given above is due to C. Ehresmann and W. Shih  \cite{reebpotente}, and its validity in this degree of generality depends on the existence of tubular neighborhoods on the leaf $L_x$. See the Appendix for its proof.

\emph{Local Model - } In the setting of Reeb's Stability Theorem, fix one of the sets $U$ and let $V=D(x)\cap U$. Since $U$ is saturated, the group homomorphism $\pi_1(L_x,x)\rightarrow G_x|E$ that assigns to each loop based at $x$ the germ of the corresponding homeorphism can be split as
$$
\pi_1(L_x,x)\xrightarrow[]{\psi} Homeo_x(V)\rightarrow G_x|E
$$
where $Homeo_x(V)$ is the group of homomorphisms of $V$ that fix $x$. Denote by $\widehat{L_x}$\ the covering space of $L_x$\ corresponding to the subgroup $\ker(\psi)$\ and let $H=Im(\psi)$. Then $H\simeq \pi_1(L,x)/\ker(\psi) $\ is isomorphic to the Deck transformation group of $\widehat{L_x}$,  and thus acts with the product action on the space $\widehat{L_x}\times V$.  This action is free and properly discontinuous, hence the quotient map
$$q: \widehat{L_x}\times V\rightarrow\widehat{L_x}\times_H V:=\widehat{L_x}\times V/\sim$$ is a covering map over the second countable (because $V$ is second countable), locally compact 
Hausdorff space $\widehat{L_x}\times_H V$. Note that by the Tychonoff-Urysohn Metrization Theorem the space  $\widehat{L_x}\times_H V$ is a separable metrizable space.  Even more, if $H$ is equipped with the discrete topology then $q: \widehat{L_x}\times V\rightarrow\widehat{L_x}\times_H V$\ is a $H$-bundle. See for example\footnote{Even though the proof there is stated for differentiable foliations, the arguments adapt easily to our context.} \cite{FoliationsI}, chap. $3$ and compare with Theorem 4.3 in \cite{FolCpct}. Observe that there exists a partition $\mathcal{H}=\{q(\widehat{L_x}\times \{v\})\}_{v\in V}$\ of $\widehat{L_x}\times_H V$\ whose atoms (also denominated \emph{leaves}) are $(m-q)$-dimensional $\mathcal{C}^r$ manifolds. These types of constructions are called \emph{suspensions} and are due to A. Haefliger.

\begin{theorem}[Local Model]\label{localmodel}
In the hypothesis of Reeb's Stability Theorem each set $U$ is homeomorphic to the set of the form $\widehat{L_x}\times_H V$\ by an homeomorphism that sends the leaves of $\mathcal{F}$ to the leaves of $\mathcal{H}$.
\end{theorem}

In particular it follows that a uniformly compact foliation has a nice structure around each leaf. 

It was first proved by D.B.A Epstein \cite{Per3Man} that the set of continuity points of $vol|E$ coincides with the points $x\in E$ with trivial holonomy. Since a lower semi-continuous function is continuous on a residual set, and residual subsets of locally compact spaces are dense we obtain (compare \cite{leavesnohol1}):

\begin{corollary}\label{genericleaves}
Let $E$ be a locally compact saturated set in a foliated manifold $M$. Then the set of points $x\in E$ with trivial holonomy is an open and dense set subset of $E$.
\end{corollary}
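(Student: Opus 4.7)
My plan is to decompose the statement into the two assertions, openness and density, and to handle them in that order using the machinery already assembled.

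For openness, I would start from a point $x\in E$ with trivial holonomy $G_x|E$ and invoke Reeb's Stability Theorem (\ref{reebpotente}): since trivial implies finite, there are arbitrarily small saturated open neighbourhoods $U\subset E$ of $L_x$ such that for every $y\in U$ the projection $p: L_y\to L_x$ is a covering with at most $|G_x|E|=1$ sheets, hence a homeomorphism. To conclude that each such $y$ also has trivial restricted holonomy in $E$, I would appeal to the Local Model (Theorem \ref{localmodel}): with $H$ trivial we have $\widehat L=L$ and $U$ is homeomorphic to $L\times V$ as a foliated space, so the leaves in $U$ are graphs over $L$ and one lifts loops trivially. This gives an open neighbourhood of $x$ contained in the set of points of trivial holonomy in $E$.

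For density, the strategy is the classical Baire-category argument of Epstein. By Corollary \ref{volumeEpstein}, $vol|E$ is lower semicontinuous, so the set
\[
C=\{x\in E:\,vol|E\text{ is continuous at }x\}
\]
is a countable intersection of open sets, hence a $G_\delta$. Since $E$ is locally compact Hausdorff, it is a Baire space, so once I show each of these open sets is dense, $C$ will be residual and therefore dense in $E$. Density of each level set follows directly from the dichotomy in Corollary \ref{volumeEpstein}: near any point the volume either explodes or is close to an integer multiple of $vol(L_x)$, so on any open saturated piece one can always approximate by points where $vol|E$ stabilizes. Finally, by Epstein's theorem (cited in the paragraph preceding the corollary) the continuity points of $vol|E$ coincide with the points of trivial holonomy in $E$, so $C$ equals the set we want.

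The main obstacle, to my mind, is not the Baire-category half (which is standard once lower semicontinuity is in hand) but rather justifying that in the local model with trivial $H$ the holonomy of \emph{every} nearby leaf $L_y$ in $E$ is trivial, not just finite. The subtlety is that the holonomy group is defined leaf-wise, so one must check that the product structure $L\times V$ supplied by the Local Model truly identifies loops on $L_y$ with loops on $L_x$ up to the required conjugation, and that the restriction to $E$ does not create spurious holonomy. I would handle this by noting that a loop on $L_y$ projects under $p$ to a loop on $L_x$ whose holonomy germ is trivial by hypothesis, and then lifting back through the homeomorphism $p$, which preserves the transverse disc identification under the Local Model.
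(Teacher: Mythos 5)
Your argument is essentially the paper's own: the paper establishes density by noting that $vol|E$ is lower semicontinuous (Corollary \ref{volumeEpstein}), that a lower semicontinuous function on a locally compact (hence Baire) space is continuous on a residual set, and that by Epstein's characterization the continuity points of $vol|E$ are precisely the points with trivial holonomy in $E$; openness is then implicit via the local model exactly as you describe. The only minor inefficiency in your write-up is that you propose to re-derive density of the open sets in the $G_\delta$ by hand from the dichotomy in Corollary \ref{volumeEpstein}, whereas the standard result that continuity points of a semicontinuous function on a Baire space form a residual set already gives this with no extra work.
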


\emph{Equivalent definitions of uniform compactness - } The following three Propositions give a useful list of equivalent definitions to uniform compactness. For the proof we refer the reader to \cite{FolCpct}.

\begin{proposition}\label{voleqhol}
Let $E$ be a locally compact saturated and $x\in E$. Then $G_x|E$ is finite if and only if $vol|E$ is locally bounded at $x$ (meaning that there exist a neighborhood $U\subset E$ of $x$ such that $vol|U$ is bounded).

Thus the foliation $\mathcal{F}$ is uniformly compact if and only if for every $x\in M$ the the group $G_x$ is finite.
\end{proposition}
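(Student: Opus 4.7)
The plan is to establish both implications of the local equivalence and then obtain the global statement by compactness of $M$. For the direction ``$G_x|E$ finite implies $vol|E$ locally bounded at $x$'' the strategy is to apply Reeb's Stability Theorem \ref{reebpotente}: if $G_x|E$ has cardinality $m$, the theorem produces a saturated relatively open neighborhood $U\subset E$ of $L_x$ on which every leaf is a covering of $L_x$ with at most $m$ sheets, and hence $vol(L_y)\le m\cdot vol(L_x)$ throughout $U$.

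For the reverse direction, assume $vol|E\le K$ on a saturated neighborhood of $L_x$ in $E$ and fix an integer $n_0>K/vol(L_x)$. Applying Lemma \ref{coveringleaves} with $n=n_0$ and invoking the volume bound through Corollary \ref{volumeEpstein} rules out the alternative in which nearby leaves cover $L_x$ more than $n_0$-to-one, so every leaf $L_y$ sufficiently close to $L_x$ in $E$ is a $k$-sheeted compact covering of $L_x$ for some $1\le k\le n_0$. The orbit of $y$ under the natural action of $G_x|E$ on the transversal slice $D(x)\cap E$ can then be identified with $L_y\cap D(x)$ near $x$ and therefore contains at most $n_0$ points. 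Consequently $g^{n_0!}$ fixes every point of $D(x)\cap E$ sufficiently close to $x$ for every $g\in G_x|E$, so $g^{n_0!}=e$ in $G_x|E$ and the group has exponent dividing $n_0!$. Moreover, since $L_x$ is a compact manifold, $\pi_1(L_x,x)$ is finitely generated and so is its quotient $G_x|E$.

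The step I expect to be the main obstacle is the passage from ``finitely generated of bounded exponent'' to ``actually finite'', which is not automatic in view of the Burnside problem. To close it I would exploit Corollary \ref{genericleaves}: pick a trivial-holonomy point $y_0\in D(x)\cap E$ arbitrarily close to $x$. Reeb's Stability Theorem applied at $y_0$ shows that the covering-degree function is locally constant on the open dense set of trivial-holonomy points of $E$, while any element in the kernel of the induced permutation action $G_x|E\to\mathrm{Sym}(\mathcal{O}_{y_0})$ must, by triviality of $G_{y_0}|E$, restrict to the identity on an open neighborhood of each point of the orbit $\mathcal{O}_{y_0}$. Letting $y_0$ range over this dense stratum, the resulting finite-index kernels intersect trivially in $G_x|E$, and the stratification argument carried out in \cite{FolCpct} (in the spirit of Epstein--Millett--Sullivan) then forces $G_x|E$ itself to be finite.

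Finally, the global assertion follows at once from the local equivalence: if $G_x$ is finite for every $x\in M$, then $vol$ is locally bounded at every point, and compactness of $M$ upgrades this into a uniform bound, giving uniform compactness of $\mathcal{F}$; conversely, uniform compactness trivially makes $vol$ locally bounded at each $x$ and hence every $G_x$ finite.
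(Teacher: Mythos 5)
The paper does not prove this Proposition: it explicitly defers to \cite{FolCpct} (Epstein--Millett) for all three equivalence Propositions in Section~\ref{Sect. 2}. So there is no in-paper proof to compare against; what matters is whether your argument stands on its own.

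Your easy direction (finite $G_x|E$ implies locally bounded volume via Reeb stability and Corollary~\ref{volumeEpstein}) is fine, and the closing global statement is fine too. The problem is in the hard direction, and you have correctly put your finger on it: from ``all nearby leaves are $\le n_0$-sheeted covers'' you get that every germ $g\in G_x|E$ has finite orbits of size $\le n_0$ near $x$, so $G_x|E$ is a finitely generated group of exponent dividing $n_0!$, and that alone does not give finiteness (Burnside). Your attempted repair, however, does not close the gap. Passing to a trivial-holonomy point $y_0$ and taking the kernel $K_{y_0}$ of the permutation action on the orbit $\mathcal O_{y_0}$, you correctly observe that $\bigcap_{y_0}K_{y_0}=\{e\}$ because an element lying in every such kernel is the identity near a dense set. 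But this only shows $G_x|E$ is residually finite. A finitely generated, residually finite group of bounded exponent is indeed finite, but that conclusion is Zelmanov's solution of the restricted Burnside problem --- far beyond what is available or intended here, and certainly not what Epstein--Millett use. The phrase ``the stratification argument carried out in \cite{FolCpct} \ldots then forces $G_x|E$ itself to be finite'' is therefore doing all the real work, and is a citation rather than an argument.

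The missing ingredient, which is what Epstein--Millett (and Epstein in the three-manifold case) actually establish, is that under the bounded-volume hypothesis the germs in $G_x|E$ are simultaneously realized by genuine homeomorphisms of one fixed connected transversal $V\subset D(x)\cap E$, not merely by germs with shrinking domains; once that is in place, Theorem~\ref{Montgomery} (Montgomery: effective action with all orbits finite implies finite group) finishes the proof immediately, with no Burnside issue at all. Note that the Lemma in the Appendix producing such a $V$ is stated only under the hypothesis that $G_x|E$ is already finite, so it cannot be quoted here; showing that $h_\alpha(V\cap E)\subset V\cap E$ for a single disc $V$ independent of $\alpha$ is precisely the nontrivial content. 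If you want to give a self-contained proof rather than cite \cite{FolCpct}, that is the step you must supply; the residual-finiteness detour you sketch does not substitute for it.
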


Denote by  $\pi_E: E\rightarrow E/\mathcal{F}$\  the canonical quotient map. The saturation of a subset $F\subset E$ is the union of all the leaves that intersect $F$.

\begin{proposition}\label{posibilidadcompacta}
The following properties are equivalent.

\begin{enumerate}
\item $\pi_E$\ is closed.
\item $E/\mathcal{F}$\ is Hausdorff.
\item Every leaf $L\subset L$\ has arbitrarily small saturated neighborhoods in $E$.
\item For every $K\subset E$\ compact, its saturation is compact.
\end{enumerate}
\end{proposition}

\begin{proposition}\label{unifcompeqhaus}
Assume that for every $x\in E$ the group $G_x|E$ is finite. Then $E/\mathcal{F}$\ is Hausdorff. Conversely, if the set $E$ is a manifold and $E/\mathcal{F}$\ is Hausdorff then every leaf in $E$ has finite holonomy in $E$.
\end{proposition}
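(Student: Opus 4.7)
The plan is to prove the two implications separately, using the results established in this section.

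For the forward direction, suppose every holonomy group $G_x|E$ is finite. Since $\mathcal{F}$ is compact, each leaf of $\mathcal{F}$ is compact, and Reeb's Stability Theorem (Theorem~\ref{reebpotente}) provides arbitrarily small relatively open saturated neighborhoods of every leaf in $E$. The equivalence (2)$\Leftrightarrow$(3) of Proposition~\ref{posibilidadcompacta} then immediately gives that $E/\mathcal{F}$ is Hausdorff.

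For the converse, assume $E$ is a manifold with $E/\mathcal{F}$ Hausdorff, and argue by contradiction. Suppose $G_{x_0}|E$ is infinite for some $x_0\in E$; by Proposition~\ref{voleqhol}, $vol|E$ is not locally bounded at $x_0$, so one finds $y_n\to x_0$ with $vol(L_{y_n})\to\infty$, and we write $L_n:=L_{y_n}$. By Proposition~\ref{posibilidadcompacta}(4) the saturation $K$ of a small compact neighborhood of $x_0$ is compact and contains the $L_n$ for large $n$. Passing to a subsequence, the $L_n$ converge in the Hausdorff metric on closed subsets of $K$ to a closed set $L_\infty$; using Lemma~\ref{coveringleaves} to lift paths from $L_p$ to nearby leaves, one checks that $L_\infty$ is $\mathcal{F}$-saturated, and $L_{x_0}\subseteq L_\infty$ since $y_n\to x_0\in L_\infty$. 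In the subcase $L_\infty\supsetneq L_{x_0}$ the argument closes easily: picking $z\in L_\infty\setminus L_{x_0}$ together with $z_n\in L_n$ satisfying $z_n\to z$, the identity $\pi_E(z_n)=\pi_E(y_n)$ and continuity of $\pi_E$ force this common sequence to converge in $E/\mathcal{F}$ to both $\pi_E(x_0)$ and $\pi_E(z)\neq\pi_E(x_0)$, contradicting Hausdorffness.

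The main obstacle is ruling out the remaining possibility $L_\infty=L_{x_0}$: here the large-volume leaves $L_n$ Hausdorff-converge to $L_{x_0}$ while wrapping around it with multiplicities $k_n\to\infty$, so on any small transversal $D$ at $x_0$ the $G_{x_0}$-orbits have size $k_n$ but cluster inside arbitrarily small neighborhoods of $x_0$, and do not by themselves supply a separating sequence. Overcoming this requires the \emph{bad set} machinery of Epstein: the set $B\subseteq E$ where $vol|E$ fails to be locally bounded is closed and $\mathcal{F}$-saturated (by lower semicontinuity of $vol$, Corollary~\ref{volumeEpstein}) and non-empty under our contradiction hypothesis. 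One then iterates the separating-sequence argument on the compact foliation $\mathcal{F}|B$ and on its successive ``bad-within-bad'' subsets; the manifold assumption on $E$ makes the transversals Euclidean and the leaf fundamental groups finitely generated, and ensures this transfinite descent terminates with a pair of leaves whose images cannot be separated in the Hausdorff quotient, yielding the desired contradiction. The detailed realization of this last step is the technical core of \cite{FolCpct}.
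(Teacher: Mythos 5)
The paper does not actually give its own proof of Proposition~\ref{unifcompeqhaus}: it states before the three propositions ``For the proof we refer the reader to \cite{FolCpct}.''\ So there is no in-text argument to compare against, and the right question is whether your sketch correctly reflects the argument in that reference.

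Your forward implication is fine and is the natural one: finite holonomy plus Reeb's Stability Theorem (Theorem~\ref{reebpotente}) gives arbitrarily small saturated relatively open neighbourhoods of every leaf, which is item~(3) of Proposition~\ref{posibilidadcompacta}, hence item~(2), i.e.\ Hausdorffness. You also handle the easy subcase of the converse correctly: if volume blows up along leaves $L_n$ whose Hausdorff limit strictly contains $L_{x_0}$, a single sequence $\pi_E(y_n)=\pi_E(z_n)$ converging to two distinct classes contradicts Hausdorffness.

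The real gap is in your account of the remaining hard case, and it is a misdirection rather than just an omission. You attribute the resolution to the Epstein filtration of bad sets and a ``transfinite descent'' on $\mathcal{F}|B$, ending in an unseparable pair of leaves. That is not how the converse is proved in \cite{FolCpct}, and the bad-set machinery is not suited to it: the filtration is the tool for analysing compact foliations whose leaf space is \emph{not} assumed Hausdorff (as in the $3$-manifold theorem of \cite{Per3Man} and Section~\ref{Sect. 3} of this paper). What actually does the work under the Hausdorff hypothesis is Proposition~\ref{posibilidadcompacta}(4) together with Theorem~\ref{Montgomery}. Concretely: Hausdorffness gives that the saturation of a small compact transverse disc $\overline{V}\subset D(x)\cap E$ is compact and can be forced inside the tubular disc bundle $W\to L_x$; then Lemma~\ref{coveringleaves} makes every $L_y$, $y\in V$, a finite-sheeted covering of $L_x$ under $p$, so each holonomy germ extends by path-lifting to a \emph{global} homeomorphism of $V$; orbits under this action are finite because $L_y\cap V$ is finite for each $y$; and since $E$ is a manifold, $V=D(x)\cap E$ is an open $q$-disc, so Montgomery's theorem (Theorem~\ref{Montgomery}) forces the group to be finite. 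This is also where the manifold hypothesis on $E$ enters---it makes the transversal a connected manifold so that Montgomery applies---not to ``terminate a transfinite descent'' or to control $\pi_1$ of the leaves. So while your conclusion points to the correct reference, the mechanism you describe would not go through, and a reader following your sketch would be led down the wrong path.
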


One of the aims of this article is to provide further equivalent conditions for a compact foliation to be uniformly compact in the case where the foliation is assumed to be the center foliation of a dynamically coherent partially hyperbolic diffeomorphism. In particular we establish that in this case uniform compactness is equivalent to completeness. See Theorem \ref{thmB} and Proposition \ref{unifimpcomp}.

\subsection{Bad Sets.}\label{Sect. 3}
We now study more carefully the sets of leaves with infinite holonomy, and for that we will use the \emph{Epstein's filtration of Bad Sets} (see \cite{Per3Man}). The definition is as follows.

\begin{definition}
Consider a compact foliation $\mathcal{F}$ on a manifold $M$. The Epstein filtration of bad sets is the family $\{B_{\alpha}\}_{\alpha}$ indexed by the ordinals, where $B_0=\{x\in M: |G_x|=\infty\}$ and for $\alpha>0$\
$$
B_{\alpha}=\begin{cases}
\{x\in B_{\alpha-1}: G_x|B_{\alpha-1}\neq \{0\}\}&\text{ if }\alpha\text{ is a successor ordinal.}\\
\cap_{\beta<\alpha} B_{\beta}&\text{ if }\alpha\text{ is a limit ordinal.}
\end{cases}
$$

The sets $B_{\alpha}$ are called the bad sets of the foliation.
\end{definition}

For the rest of this section we fix a compact foliation $\mathcal{F}$ on a \emph{compact} manifold $M$ with Epstein filtration $\{B_{\alpha}\}_{\alpha}$.  We collect in the next Proposition some facts about the Epstein filtration (c.f. \cite{Per3Man}). Because of the central role that it plays in our arguments, and for convenience of the reader, we give the proof.

\begin{proposition}\label{propertiesBalfa}
The following properties hold.
\begin{enumerate}
\item Each $B_{\alpha}$ is a compact saturated set.
\item If $\beta<\alpha$ then $B_{\beta}\supset B_{\alpha}$ is nowhere dense.
\item There exist a successor ordinal $\widehat{\alpha}$ less than the first uncountable ordinal such that $B_{\widehat{\alpha}-1}\neq \emptyset, B_{\widehat{\alpha}}=\emptyset$. We will write $B_{end}:=B_{\widehat{\alpha}-1}$.
\item For every ordinal $\alpha$,  the set $B_{\alpha}\setminus B_{\alpha+1}$\ is a locally trivial fibration with fibers the leaves of $\mathcal{F}|B_{\alpha}\setminus B_{\alpha+1}$. In particular the set $B_{end}$ is a compact locally trivial fibration with compact fibers.
\end{enumerate}
\end{proposition}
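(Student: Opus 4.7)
The plan is to prove the four properties in order, using transfinite induction as the overarching framework, driven at the successor step by Corollary \ref{genericleaves} and at limit steps by elementary point-set considerations. For (1) and (2), saturation of every $B_\alpha$ is free because the restricted holonomy group is a leaf invariant; closedness of $B_0$ follows from Proposition \ref{voleqhol}, since $B_0$ is exactly the set of points at which $vol$ fails to be locally bounded, and this set is plainly closed. At a successor step, Corollary \ref{genericleaves} applied to the locally compact saturated set $B_\alpha$ shows that the points of $B_\alpha$ with trivial $G_x|B_\alpha$ form an open dense subset, whose complement is precisely $B_{\alpha+1}$; this set is therefore closed in $B_\alpha$ (hence compact) and nowhere dense in it, yielding the successor cases of (1) and (2) simultaneously. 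The limit step of (1) is preservation of compactness and saturation under arbitrary intersection; for (2) with $\alpha > \beta+1$, the inclusion $B_\alpha \subset B_{\beta+1}$ reduces the nowhere-dense claim to the already-treated successor case.

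For (3), the key observation is that $B_\alpha \neq \emptyset$ forces $B_{\alpha+1} \subsetneq B_\alpha$, again by Corollary \ref{genericleaves}. To bound the length of the filtration I would exploit second countability of $M$: fix a countable basis $\{U_n\}$ and, for each ordinal $\alpha$ at which the filtration strictly drops, pick a basis element $U_{n(\alpha)}$ meeting $B_\alpha \setminus B_{\alpha+1}$ and disjoint from the closed set $B_{\alpha+1}$; if $\alpha < \beta$ then $U_{n(\beta)}$ must meet $B_\beta \subset B_{\alpha+1}$, forcing $n(\alpha) \neq n(\beta)$. Hence the filtration strictly decreases at most countably many times and must stabilize at some countable ordinal $\widehat{\alpha}$; by the pivotal observation above the stable value must be $B_{\widehat{\alpha}} = \emptyset$. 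To see that $\widehat{\alpha}$ is a successor I would argue by contradiction: if $\widehat{\alpha}$ were a limit ordinal, then $\{B_\beta\}_{\beta<\widehat{\alpha}}$ would be a decreasing family of non-empty compact subsets of $M$ and would have non-empty intersection, contradicting $B_{\widehat{\alpha}}=\emptyset$. This coupling of the descending-chain argument with compactness is, I expect, the subtlest point in the whole proposition.

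For (4) I would apply Reeb's Stability Theorem \ref{reebpotente} and the Local Model \ref{localmodel} with $E = B_\alpha$. At any $x \in B_\alpha \setminus B_{\alpha+1}$ the group $G_x|B_\alpha$ is trivial, so Reeb furnishes a saturated relatively open set $U \subset B_\alpha$ containing $L_x$ in which every leaf is a one-sheeted cover of $L_x$; the Local Model then identifies $U$ homeomorphically with a product $L_x \times V$ respecting the foliation. Every leaf of $U$ inherits trivial restricted holonomy, so $U \subset B_\alpha \setminus B_{\alpha+1}$, and the projection $U \to V$ is the required local trivialisation. Applied with $\alpha = \widehat{\alpha}-1$ (where $B_{\alpha+1}$ is empty) this also gives the final assertion for $B_{end}$.
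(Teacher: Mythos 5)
Your proof is correct and follows essentially the same route as the paper: Corollary~\ref{genericleaves} drives (1) and (2), a second-countability argument gives (3), compactness rules out $\widehat{\alpha}$ being a limit, and Theorem~\ref{localmodel} gives (4). The only variation is in (3), where the paper extracts an accumulation point from an uncountable family of witnesses $x_\alpha\in B_\alpha\setminus B_{\alpha+1}$, whereas you inject the strict-drop ordinals into a countable basis; both are standard phrasings of the fact that a strictly decreasing transfinite chain of closed sets in a second-countable space stabilizes at a countable step.
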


\begin{proof}

The first and the second part follow directly from Corollary \ref{genericleaves} (we emphasize that $M$ is compact otherwise we could only assert that $B_{\alpha}$ is closed).

For the third part, assume for the sake of contradiction that there exists an uncountable ordinal such that the corresponding bad set is non-empty. Let $\Omega$ be the first uncountable ordinal, and for each $\alpha<\Omega$ choose a point $x_{\alpha}\in B_{\alpha}\setminus B_{\alpha+1}$. The set $X=\{x_{\alpha}\}_{\alpha<\Omega}$ is uncountable, and since the manifold $M$ has a countable basis it follows that there exist some $\alpha_0$ such that $x_{\alpha_0}$ is an accumulation point of $X$, and moreover each neighborhood of $x_{\alpha_0}$ has uncountably many points of $X$ (see for example \cite{Kelley} chap. 1). There exist countably many ordinals smaller that $\alpha_0$ so in particular each neighborhood of $x_{\alpha_0}$ contains points $x_{\alpha}$ with $\alpha>\alpha_0$. This is a contradiction because the set $B_{\alpha_0}\setminus B_{\alpha_0+1}$ is open in $B_{\alpha_0}$. Observe that since the intersection of nested non-empty compact sets is non-empty, $\widehat{\alpha}$ cannot be a limit ordinal.

The last part is a consequence of Theorem \ref{localmodel}.
\end{proof}

On the other hand, the next Lemma shows that $B_{\alpha+1}$ is precisely the obstruction for $\mathcal{F}|B_{\alpha}$ to be a locally trivial fibration. Compare Section 1 in \cite{BadSets2}.

\begin{lemma}\label{notfibration}
Consider a bad set $B_{\alpha}$, a point $x\in B_{\alpha+1}$ and define $X_{\alpha}=B_{\alpha}\setminus B_{\alpha+1}\cup \{L_x\}$. Then $\mathcal{F}|X_{\alpha}$ is not a locally trivial fibration.
\end{lemma}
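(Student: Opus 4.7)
The plan is to argue by contradiction. Suppose $\mathcal{F}|X_{\alpha}$ were a locally trivial fibration. Then around the compact leaf $L_x$ there would exist a saturated open neighborhood $U\subset X_{\alpha}$ homeomorphic to a product $L_x\times T$, with leaves corresponding to the slices. Such a trivialization forces the restricted holonomy $G_x|X_{\alpha}$ to be trivial, so the germ at $x$ of every holonomy homeomorphism, viewed on the transversal slice $D(x)\cap X_{\alpha}$, is the identity. The goal is then to contradict the fact that $x\in B_{\alpha+1}$, which by definition says $G_x|B_{\alpha}$ is nontrivial.

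To set up the contradiction, I would pick a loop $\gamma$ based at $x$ in $L_x$ whose associated holonomy homeomorphism $h\colon W\cap B_{\alpha}\to W'\cap B_{\alpha}$ (for small transversal neighborhoods of $x$) is not the identity on any neighborhood of $x$ in $B_{\alpha}$; such a $\gamma$ exists precisely because $x\in B_{\alpha+1}$. Since $h$ moves each point along its leaf and $X_{\alpha}=(B_{\alpha}\setminus B_{\alpha+1})\cup L_x$ is saturated (both pieces being saturated), $h$ restricts to a well-defined self-homeomorphism of $W\cap X_{\alpha}$. By the standing assumption this restriction coincides with the identity on some relative neighborhood of $x$ in $W\cap X_{\alpha}$; in particular $h(y)=y$ for all $y$ near $x$ in $W\cap(B_{\alpha}\setminus B_{\alpha+1})$.

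The key step is then a density plus continuity argument. Applying Corollary \ref{genericleaves} to the locally compact saturated set $B_{\alpha}$, the subset $B_{\alpha}\setminus B_{\alpha+1}$ of leaves with trivial holonomy in $B_{\alpha}$ is open and dense in $B_{\alpha}$, so $W\cap(B_{\alpha}\setminus B_{\alpha+1})$ is dense in $W\cap B_{\alpha}$. Since holonomy homeomorphisms are continuous, the equality $h=\mathrm{id}$ propagates from this dense set to all of $W\cap B_{\alpha}$. This contradicts the choice of $\gamma$ as a loop representing a nontrivial element of $G_x|B_{\alpha}$, completing the argument.

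The main subtlety I anticipate is just keeping the three transversal slices straight: $D(x)$ in $M$, $D(x)\cap B_{\alpha}$, and $D(x)\cap X_{\alpha}$, and verifying that the holonomy $h$ genuinely restricts continuously to each of them so that the density argument can be applied. Once this bookkeeping is in place, the proof is essentially a one-line density/continuity observation combined with the strict nesting guaranteed by Proposition \ref{propertiesBalfa}(2); no deeper input is needed.
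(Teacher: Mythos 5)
Your argument is correct, and it takes a genuinely different route from the paper's. The paper works with volume and covering degree: it observes that in a locally trivial fibration the leaf volume is essentially locally constant, and then uses the nontriviality of $G_x|B_{\alpha}$ together with the density of $B_{\alpha}\setminus B_{\alpha+1}$ (Corollary~\ref{genericleaves}) to produce, arbitrarily close to $x$, points $y\in B_{\alpha}\setminus B_{\alpha+1}$ whose leaves cover $L_x$ at least $2$-to-$1$; these leaves therefore have roughly twice the volume of $L_x$, which is incompatible with the fibration assumption. You instead work entirely at the level of holonomy germs: a locally trivial fibration forces $G_x|X_{\alpha}=\{0\}$, so any holonomy representative $h$ restricts to the identity on a relative neighborhood of $x$ in $W\cap X_{\alpha}$, hence on the dense subset $W\cap(B_{\alpha}\setminus B_{\alpha+1})$, and continuity then propagates $h=\mathrm{id}$ to all of $W\cap B_{\alpha}$ near $x$, contradicting $x\in B_{\alpha+1}$. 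Both proofs lean on Corollary~\ref{genericleaves}, but you use the dense set to push an identity outward, whereas the paper uses it to locate an explicit witness leaf of large covering degree. Your version is cleaner and sidesteps the somewhat informal ``volume is essentially locally constant'' step. The one point I would make explicit is the transversal density claim: knowing $B_{\alpha}\setminus B_{\alpha+1}$ is dense in $B_{\alpha}$ does not automatically transfer to an arbitrary slice, but it does here precisely because $B_{\alpha+1}$ is saturated --- if $W\cap B_{\alpha+1}$ had nonempty interior in $W\cap B_{\alpha}$, saturating that interior inside a foliation box would give $B_{\alpha+1}$ nonempty interior in $B_{\alpha}$, contradicting Proposition~\ref{propertiesBalfa}(2). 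With that sentence inserted the proof is complete.
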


\begin{proof}
Since $x\in B_{\alpha+1}$ its holonomy group $G_x|B_{\alpha}$ is not trivial. Consider the disc bundle $p:W\rightarrow L_x$ as discussed in the previous section, and note that  in a fibration the volume is essentially locally constant. Hence, it suffices to show the following.

\emph{Claim:} In each neighborhood $U\subset B_{\alpha}$ of $x$ there exist points $y\in U\cap B_{\alpha}\setminus B_{\alpha+1}$ such that $p|L_y\rightarrow L_x$ is more than $1$ to $1$.

Fix one of such neighborhoods $U$ and observe by Lemma \ref{coveringleaves} that one can find some neighborhood $V\subset U$ with the property that for all $y\in V, p:L_y\rightarrow L_x$ is $k$ to one with $k\geq 1$. Since $G_x|B_{\alpha}\neq \{0\}$, there exists some $y_0\in V$ such that the projection is $k$ to $1$ with $k>1$. If $y_0\ni B_{\alpha}$ we are done. Otherwise, by Corollary \ref{genericleaves}, arbitrarily close to $y_0$ there exist points $y\in B_{\alpha}\setminus B_{\alpha+1}$. Observe then that for those points, their leaves have to intersect $p^{-1}(x)$ at least as many times as $L_{y_0}$ does, hence $p:L_y\rightarrow L_x$ is also more than $1$ to $1$ and we finish the proof.
\end{proof}

\begin{remark}
It would be interesting to show that $X_{\alpha}$ is never locally compact, or what it is equivalent, that $x$ has a neighborhood $U\subset B_{\alpha}$ such that $U\cap B_{\alpha+1}=L_x$. That is probably the case, but I do not know any argument to prove it.
\end{remark}

\subsection{Holonomy of Compact Dynamical Foliations}\label{Sect. 4}

Let us go back to dynamics. Fix $f:M\rightarrow M$ partially hyperbolic with compact center foliation \Fc. Given a point $x\in M$\ and a positive number $\gamma>0$\ we will denote by $\Wsl[\gamma]{x}$\ the open disc of size $\gamma$\ inside the leaf $\Ws{x}$, measured with the intrinsic metric. Similarly for $\Wul[\gamma]{x},L(x;\gamma)$. For a leaf $L\in \Fc$ we also define 

$$\Wsl[\gamma]{L}=\bigcup_{x\in L}\Wsl[\gamma]{x}$$
$$\Wul[\gamma]{L}=\bigcup_{x\in L}\Wul[\gamma]{x}.$$

\emph{Coordinates on transverse discs - } In the definition or partial hyperbolicity the metric used can be assumed to make the bundles $E^c,E^s$\ and $E^u$\ mutually orthogonal \cite{AdaptedMet}. From dynamical coherence then it follows that we have \emph{local product structure}: there exists some $r>0$\ such that if we denote by $H_x^r=\Wul[2r]{L(x;2r)},V_x^r=\Wsl[2r]{L(x;2r)}$\ then $d(x,y)<r$\ implies that $H_x^r\cap V_y^r$ contains a center plaque of diameter bigger than or equal to $r$. See Section 7 of \cite{HPS}.

In particular if $D$\ is a small $q$-dimensional transverse disc to \Fc\ centered at $x$\ and $y\in D$\ there exist uniquely defined points $y^u_D\in \Wul[2r]{x},y^s_D\in \Wsl[2r]{x}$\ such that $y=D\cap V_{y^u_D}^r\cap H_{y^s_D}^r$. The map  $\Psi_x^D:D\rightarrow \Wul[2r]{x}\times\Wsl[2r]{x}$\ given by
$$
\Psi_x^D(y)=(y^u_D,y^s_D)
$$
is an open embedding, and thus define a continuous system of coordinates on $D$ (see figure \ref{coordenadas}).

\begin{figure}
	\centering
		\includegraphics[width=8cm]{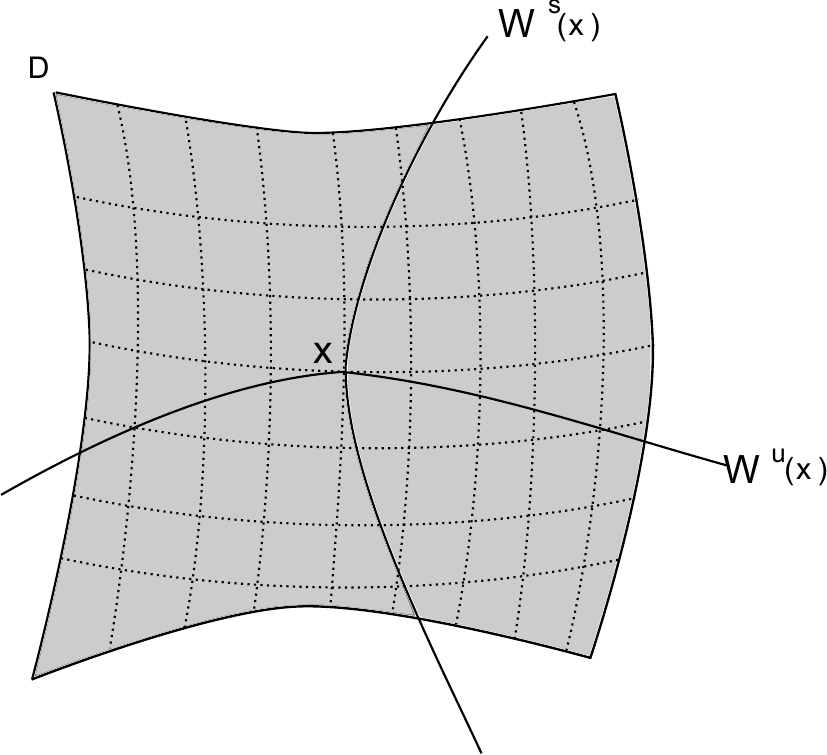}
	\caption{Transverse coordinates in $D$.}
	\label{coordenadas}
\end{figure}


We then have the following useful Proposition.

\begin{proposition}\label{producto}\
For every $x\in M$ it holds
$$\max\{|G^s_x|,|G^u_x|\}\leq |G_x|\leq |G^u_x||G^s_x|.$$
In particular if both groups $G^u_x,G^s_x$ are finite, then $G_x$ is finite.
\end{proposition}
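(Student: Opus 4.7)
The plan is to realize both inequalities via the local product coordinate system $\Psi_x^D:D\to \Wul[r]{x}\times\Wsl[r]{x}$ described just above the statement. In these coordinates the local $W^{cs}$--plaques through $D$ are exactly the ``vertical'' slices $\{y^u\}\times\Wsl[r]{x}$ (because $y\in\Wcs{x}$ forces $y^u\in \Wcs{x}\cap\Wul[r]{x}=\{x\}$), and symmetrically the local $W^{cu}$--plaques are the ``horizontal'' slices $\Wul[r]{x}\times\{y^s\}$. Moreover $D\cap\Wcs{x}$ is just the vertical fiber over $x$, which serves as a transversal for the foliation $\Fc|\Wcs{x}$; so by construction elements of $G^s_x$ are germs of maps on $\Wsl[r]{x}$, and analogously for $G^u_x$.

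The central step is to show that every representative holonomy homeomorphism $h:O(x)\subset D\to D$ coming from a loop $\alpha\subset L_x$ preserves the local $W^{cs}$-- and $W^{cu}$--plaque structure of $D$. For this I would cover $\alpha$ by finitely many foliation boxes that are \emph{simultaneously} foliation boxes for $\Fc$, $\Fcs$ and $\Fcu$, available by dynamical coherence since $\Fc$ sub--foliates both $\Fcs$ and $\Fcu$. Using such a cover to perform the standard plaque--chain lifting of $\alpha$, one sees that if $y\in D$ lies in $\Wcs{x}$ then the lifted path $\widehat{\alpha}_y$ stays in $\Wcs{y}=\Wcs{x}$, so $h(y)\in\Wcs{x}$ as well; an identical argument works for $\Wcu{x}$, and replacing the base chain by a nearby one shows the whole local $W^{cs}$-- and $W^{cu}$--plaque structures in $D$ are permuted by $h$. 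In the product coordinates this means $h$ has the form $h(y^u,y^s)=(h^u(y^u),h^s(y^s))$ for local homeomorphisms $h^u$ of $\Wul[r]{x}$ fixing $x$ and $h^s$ of $\Wsl[r]{x}$ fixing $x$.

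From this product structure the two inequalities fall out immediately. The assignments $[h]\mapsto [h^s]$ and $[h]\mapsto [h^u]$ define group homomorphisms $R^s:G_x\to G^s_x$ and $R^u:G_x\to G^u_x$; they are surjective because the very same loops in $L_x$ that define $G_x$ also define $G^s_x$ and $G^u_x$ (since $L_x\subset\Wcs{x}$ and $L_x\subset\Wcu{x}$, with the corresponding lifts being the restrictions of the ambient lift). Surjectivity yields $\max\{|G^s_x|,|G^u_x|\}\leq |G_x|$. Combining the two into $\Phi=(R^u,R^s):G_x\to G^u_x\times G^s_x$, the factorization $h=(h^u,h^s)$ shows that if $[h^u]$ and $[h^s]$ are both trivial germs, then $[h]$ is the trivial germ on $D$, so $\Phi$ is injective; consequently $|G_x|\leq |G^u_x|\cdot |G^s_x|$, and finiteness of $G^u_x$ and $G^s_x$ forces finiteness of $G_x$.

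The main obstacle is the ``key step'' of the second paragraph: turning dynamical coherence into the statement that holonomy lifts of $\alpha$ starting inside $\Wcs{x}$ (resp.\ $\Wcu{x}$) remain inside it. One must compactly cover $\alpha$ by boxes that chart $\Fc$, $\Fcs$ and $\Fcu$ at once and check that the standard plaque--chain lift in the $\Fc$--box is automatically a plaque--chain lift in the $\Fcs$--box, so that the lifted endpoint lies in the same center--stable leaf as the starting point. All remaining arguments are routine group--theoretic and coordinate manipulations.
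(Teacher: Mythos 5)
Your proof is correct and takes essentially the same approach as the paper: both exploit the local product coordinates $\Psi_x^D$ to realize each holonomy germ as a product $(h^u,h^s)$ and then read off both inequalities from the resulting maps to $G^u_x$ and $G^s_x$ (the paper phrases this via saturated neighbourhoods and the projections $pr_u,pr_s$ after a case split on finiteness, while you package it more cleanly as surjective homomorphisms $R^u,R^s$ whose product is injective). The ``key step'' you flag as the main obstacle is actually immediate and needs no simultaneous plaque--chain argument: since $\Fc$ subfoliates $\Fcs$ and $\Fcu$ by dynamical coherence, the lift $\widehat{\alpha}_y$ lies entirely in the single center leaf $L_y\subset\Wcs{y}=\Wcs{x}$ (resp.\ $\Wcu{y}$), so the endpoint stays in the same $cs$-- (resp.\ $cu$--) plaque of $D$ automatically.
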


\begin{proof}
Note first that if either $G^s_x$\ or $G^u_x$\ are infinite then by Proposition \ref{voleqhol} there exist center leaves close to $L_x$\ with arbitrarily large volume, so $G_x$\ cannot be finite. Assume now that both groups $G^s_x, G^u_x$ are finite. Observe that the first inequality follows easily by definition of holonomy. For the second one, by Theorem \ref{posibilidadcompacta} the leaf $L_x$\ has saturated neighborhoods $A\subset \Wul[r/2]{L}, B\subset \Wsl[r/2]{L}$. Take now a small $q$-dimensional disc $D$\ centered at $x$  transverse to $\Fc$, and note that any other leaf $L'$\ such that $L'\cap D$\ is sufficiently close to $x$ satisfies
$$
sup\{d(y,L_x):y\in L'\}<r/2
$$
Hence $L'$ is contained in the neighborhood of $L_x$ where the local product structure is defined. We deduce that there exist an open neighborhood $U$\ of $L$\ and  well defined projections $pr_u:U\rightarrow A, pr_s:U\rightarrow B$\ such that for each center leaf $L'\subset U$ the sets $pr_u(L'),pr_s(L')$\ consist of a unique leaf. By using the coordinate system on $D$\ defined above, we obtain at that $G_x$\ is finite and it is generated by the elements of the product $G^u_x\times G^s_x$.
\end{proof}

\begin{remark}
We observe that it does not follow that $G_x=G^s_x\times G^u_x$. As an example, take $M$ the mapping torus of $-Id$ on $\mathbb{T}^3$ and let $f:M\rightarrow M$ be the map induced by $A\times Id : \mathbb{T}^2\times [0,1]$, where $A$ is the usual Thom's map. One readily verifies that $f$ is partially hyperbolic, and that if $L_0$ denotes the center leaf obtained by projecting $\{0\}\times [0,1]$ on $M$ then
$$
G_0\approx G^s_0\approx G^u_0\approx\mathbb{Z}_2.
$$ 
\end{remark}

\begin{corollary}\label{codim1}
Assume that $E^c$\ has codimension one inside both $E^{cs}$\ and $E^{cu}$, then.
\begin{enumerate}
\item  The foliation \Fc\ is uniformly compact. Moreover, if both $E^c$ and its normal bundle $E^s\oplus E^u$ are orientable  then all center leaves have trivial holonomy.
\item There exist a finite covering $\widehat{f}$\ of $f$\ such that $\widehat{f}$\ fibers over a map $g:\mathbb{T}^2\rightarrow \mathbb{T}^2$\ which is conjugate to a hyperbolic automorphism.
\end{enumerate}
\end{corollary}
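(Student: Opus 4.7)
For Part (1) I would work one leaf at a time to estimate the stable and unstable holonomy groups. Fix $L \in \Fc$ and, using $\dim E^s = 1$, pick a one-dimensional transversal $D^s \subset W^{cs}(L)$ to $L$ at some $x \in L$. The decisive observation is that for every $y$ near $x$ the compact leaf $L_y$ is tangent to $E^c$ while $D^s$ is tangent to $E^s$, so they intersect transversely inside $W^{cs}(L)$; hence $\overline{D^s} \cap L_y$ is finite, and consequently every $G^s_L$-orbit on $D^s$ is finite. If $[h] \in G^s_L$ is represented by an orientation-preserving homeomorphism of a neighborhood of $x$, then $h$ is strictly monotone, and a strictly monotone map of an interval with every orbit finite must be the identity (otherwise $\{h^n(y)\}$ would be strictly monotone in $n$, contradicting finiteness of the orbit). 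Thus every orientation-preserving germ in $G^s_L$ is trivial, forcing $|G^s_L| \leq 2$ (the possible non-trivial element being an orientation-reversing involution). The same argument inside $W^{cu}(L)$ yields $|G^u_L| \leq 2$; Proposition \ref{producto} then gives $|G_L| \leq 4$, and Proposition \ref{voleqhol} delivers uniform compactness of $\Fc$.

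For the orientability refinement in Part (1), observe that if $E^s|_L$ is orientable as a line bundle then every element of $G^s_L$ preserves the orientation of $D^s$ and is therefore trivial by the previous paragraph; similarly for $G^u_L$. The hypothesis that $E^c$ and $E^s \oplus E^u$ are orientable, together with the dynamical distinction between the contracting summand $E^s$ and the expanding summand $E^u$ under $df$, lets one split an orientation of $E^s \oplus E^u$ into coherent individual orientations of $E^s$ and of $E^u$ along any loop in $L$, yielding triviality of $G_L$ via Proposition \ref{producto}.

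For Part (2), I would pass to a finite cover $\pi: \widehat{M} \to M$ of degree at most $8$ on which the pullbacks of $E^c, E^s, E^u$ are all orientable. Since $f$ permutes the (finite) set of such covers of $M$, some iterate $f^N$ lifts to a diffeomorphism $\widehat f: \widehat M \to \widehat M$, and by the orientability refinement of Part (1) every center leaf of $\widehat{\Fc} := \pi^{-1}\Fc$ has trivial holonomy. Theorem \ref{localmodel} then shows that $N := \widehat M / \widehat{\Fc}$ is a closed $2$-manifold and that the quotient $q: \widehat M \to N$ is a smooth locally trivial fiber bundle. The diffeomorphism $\widehat f$ descends to $g: N \to N$, and the images under $dq$ of $E^s$ and $E^u$ give $g$-invariant one-dimensional line fields on $TN$ that are respectively contracted and expanded by $dg$, so $g$ is Anosov. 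The Franks--Newhouse classification of Anosov diffeomorphisms on closed surfaces then forces $N = \mathbb{T}^2$ with $g$ topologically conjugate to a hyperbolic toral automorphism.

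The main obstacle I anticipate is the orientability step: the hypothesis concerns $E^c$ and the direct sum $E^s \oplus E^u$ rather than the summands separately, so one must exploit the dynamically invariant splitting carefully to see that both $E^s|_L$ and $E^u|_L$ are orientable over every $L$. Once this is in place, the remaining ingredients — Propositions \ref{producto} and \ref{voleqhol} for uniform compactness, and the local product model of Theorem \ref{localmodel} together with the classical Franks--Newhouse theorem for Part (2) — combine routinely.
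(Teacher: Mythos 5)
Your argument for the main claim of Part (1) takes a genuinely different route from the paper. The paper invokes Haefliger's theorem (Theorem 3.2 of \cite{VarFeu}) that for a $\mathcal{C}^{1,0}$ compact codimension-one foliation the saturation of a compact set is compact, then passes through Propositions \ref{posibilidadcompacta} and \ref{unifcompeqhaus} to get finite holonomy; you instead argue directly from transversality and monotonicity. Your route is elementary and clean, but the sentence ``a strictly monotone map of an interval with every orbit finite must be the identity (otherwise $\{h^n(y)\}$ would be strictly monotone in $n$, contradicting finiteness of the orbit)'' is stated too loosely: $h$ is only a germ, and if $h(y)>y$ the forward iterates may leave its domain after finitely many steps without any contradiction. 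The argument is saved by iterating toward the fixed point: if $h$ is orientation-preserving and not the identity germ, pick $y>0$ with $h(y)\neq y$ and iterate in the contracting direction; the iterates decrease to a fixed point and thus give infinitely many points of $L_y\cap D^s$ inside a compact subarc, contradicting the transversality bound. With that repair, Part (1) holds. Part (2) follows the same outline as the paper (which cites Section 2.4 of \cite{Tranph}), though you should note the quotient and induced map are a priori only topological since $\Fc$ has transverse regularity $\mathcal{C}^0$, so the final step uses expansive surface theory rather than literally ``$g$ is Anosov.''

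There is, however, a genuine gap in your treatment of the orientability refinement, and it is not repairable as written. You claim that orientability of $E^c$ and of $E^s\oplus E^u$, ``together with the dynamical distinction between the contracting summand $E^s$ and the expanding summand $E^u$,'' lets one split an orientation of $E^s\oplus E^u$ into orientations of $E^s$ and $E^u$ separately. This is false: a Whitney sum of two line bundles can be orientable with both summands nonorientable ($w_1(E^s)=w_1(E^u)\neq 0$), and dynamical invariance of the splitting imposes no further constraint. In fact the example immediately preceding Corollary \ref{codim1} in the paper (the mapping torus of $-\mathrm{Id}$ with a lifted Anosov map) has $E^c$ and $E^s\oplus E^u$ orientable yet a center leaf with holonomy $\mathbb{Z}_2$; so the hypothesis that does imply triviality of the holonomy via Proposition \ref{producto} is orientability of $E^s$ and of $E^u$ separately, not of their sum. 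You correctly flagged this step as the main obstacle, but the ``dynamical splitting'' idea does not overcome it.
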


For the case $\dim M =3$\ this Theorem was noted in see \cite{Tranph} (Proposition 2.12). Uniform boundedness (for general $\mathcal{C}^{1,0}$ compact foliations) also follows by combining the results of \cite{FolCpctForm} and \cite{PointPerHomeo}, although the arguments are much more sophisticated in the general case.

\begin{proof}
We use the following result of A. Haefliger (Theorem 3.2 in \cite{VarFeu}).

\begin{theorem}
Let $\F$ be a $\mathcal{C}^{1,0}$ compact codimension one foliation on a (not necessarily compact) manifold $V$ that is tangent to a continuous sub-bundle of $TV$. Then the saturation of any compact set is compact.
\end{theorem}
By Prop. \ref{unifcompeqhaus} it follows that in this case each leaf of $\F$ has finite holonomy. Furthermore, since the holonomy maps are represented by local homeomorphisms of $\mathbb{R}$, one can show that every holonomy group has order at most two, and in fact has order two precisely when it contains an element which reverses the orientation of $\mathbb{R}$.

Take $x\in M$ and apply Haefliger's Theorem to the foliations $\Fc|\Wcs{x}$ and $\Fc|\Wcu{x}$ to conclude that $G_x^s,G^u_x $ are finite. Part (1) of the statement follows then by Proposition \ref{producto}. Part (2) follows exactly as in  Section 2.4 of \cite{Tranph}.
\end{proof}

\subsection{More on completeness.}

We continue working with a fixed partially hyperbolic diffeomorphism $f: M\rightarrow M$ whose center foliation \Fc\ is compact. In the Introduction we discussed the concept of completeness and we related it with uniform compactness (Theorem \ref{thmB}). Here we investigate some more of its properties. We start by noting the following.

\begin{proposition}\label{completo}
If $\Ws{L_x}$\ is complete. Then $\Ws{L_x}=L^{cs}_x$. 
\end{proposition}

\begin{proof}
It suffices to show that $\Ws{L_x}$\ is closed inside the center stable manifold where it is contained. Take a sequence $(z_n)_n$\ in $\Ws{L_x}$ converging to a point $z\in \Wcs{x}$. Consider a foliation box $U$ of \Fc\ around $z$ and denote by $P_z$ corresponding plaque.  For $n$ sufficiently large, $z_n\in U$ and hence there exist a stable manifold of a point in $L_x$\ that intersects $P_z$. Since it was assumed that $\Ws{L_x}$\ is complete, we conclude that $L_z\subset \Ws{L_x}$, and in particular $z\in\Ws{L_x}$. Thus $\Ws{L_x}$\ is closed.
\end{proof}

\begin{remark}
In the case of 3-manifolds the previous Proposition was proved in \cite{Tranph}.
\end{remark}

We also observe that in our setting (compact center foliations) completeness essentially means that ``leaves do not escape to infinity''. We make precise this idea with the following Lemma (whose proof is immediate).

\begin{lemma}
Suppose that $L,L'$\ are compact center leaves with $L'\subset \Ws{L}$, and assume that $\Ws{L'}$\ is complete. Then $L\subset \Ws{L'}$, and in particular for every $x\in L$\ we have $\Ws{x}\cap L'\neq\emptyset$.
\end{lemma}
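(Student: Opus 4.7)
The plan is to exploit the fact that the relation ``$y \in \Ws{x}$'' is symmetric in $x$ and $y$ — stable leaves partition $M$ into equivalence classes — together with the definition of completeness as saturation by the center foliation. With this in hand, the lemma reduces to producing a single point of $L$ that lies in $\Ws{L'}$.

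First I would pick any $z \in L'$. The hypothesis $L' \subset \Ws{L}$ furnishes some $x \in L$ with $z \in \Ws{x}$; by symmetry of the stable equivalence relation this is the same as $x \in \Ws{z}$, and in particular $x \in \Ws{L'}$. Hence $L \cap \Ws{L'} \neq \emptyset$.

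Next I would invoke completeness: by Definition \ref{defcomp}, $\Ws{L'}$ is saturated by $\Fc$, so the entire center leaf $L$ through $x$ is contained in $\Ws{L'}$, giving the first conclusion $L \subset \Ws{L'}$. For the final assertion, take any $y \in L \subset \Ws{L'}$; unfolding the definition of $\Ws{L'}$ yields some $z_y \in L'$ with $y \in \Ws{z_y}$, and by symmetry $z_y \in \Ws{y} \cap L'$, so this intersection is non-empty.

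There is no real obstacle; the only subtlety is not conflating the hypothesis $L' \subset \Ws{L}$ (which is an assertion about how $L'$ sits relative to $L$) with the sought inclusion $L \subset \Ws{L'}$, and using symmetry of ``lying on the same stable leaf'' at one witness point, together with completeness, to bridge the two.
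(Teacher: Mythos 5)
Your proof is correct, and it is precisely the ``immediate'' argument the paper has in mind (the paper states this lemma with no written proof, remarking only that the proof is immediate). You correctly identify the two ingredients: (i) the leaves of $\Fs$ partition $M$, so $z \in \Ws{x}$ and $x \in \Ws{z}$ are equivalent statements; and (ii) completeness of $\Ws{L'}$ in the sense of Definition \ref{defcomp} is exactly $\Fc$-saturation, so one witness point of $L$ in $\Ws{L'}$ drags the whole leaf $L$ inside. The final assertion then follows by the same symmetry applied at an arbitrary $y \in L$. No gaps.
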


We finish by noting that Theorem \ref{thmB} has a converse.

\begin{proposition}\label{unifimpcomp}
Consider $f:M\rightarrow M$ partially hyperbolic with uniformly compact center foliation \Fc. Then \Fc\ is complete.
\end{proposition}

\begin{proof}
As \Fc\ is uniformly compact, by Theorem \ref{posibilidadcompacta} the space $X:=M/\Fc$ is a compact Hausdorff space, thus it is metrizable (cf. Prop. 17 in \cite{gentop}). By a harmless use of notation we will use the same notation for points of $X$ and leaves. A  compatible metric is given by

$$L,L'\in X,\quad d_X(L,L')=\inf\{d(x,y):x\in L, y\in L'\}.$$

Fix a leaf $L\in\Fc$, and consider any other leaf $L'$\ such that $L'\cap\Ws{L}\neq\emptyset$. 

\emph{Claim:} If $\epsilon>0$\ is sufficiently small then there exists some positive integer $N$\ such that 
$$
n\geq N \Rightarrow d_X(f^nL,f^nL')<\epsilon.
$$
For $F\in X$ and $\delta>0$ denote by $B_{X}(F,\delta)$ the open ball in $X$ of center $F$ and radius $\delta$. By compactness of $X$, there exist leaves $F_1,\ldots F_k$ and $0<\delta<\frac{\epsilon}{2}$\ such that
$$
X=\cup_{i=1}^k B_{X}(F_i,\delta).
$$
Now there exist points in $L,L'$ in the same strong stable manifold, and thus by iterating we can guarantee that there exists $N$ so that for $n\geq N$  the leaves $f^nL, f^nL'$ have points whose distance apart is less than the Lebesgue number of the covering $\{ B_{X}(F_i,\delta)\}_{i=1}^n$. Thus
$$\forall n\geq N,\quad d_X(f^nL,f^nL')<2\delta<\epsilon.$$

For $\epsilon>0$ sufficiently small, by Theorem 6.1 in \cite{HPS}  we conclude that $f^nL'\subset\Wsl[2\epsilon]{f^nL}$, and thus $L'\subset\Ws{L}$.
\end{proof}

\begin{corollary}
Let $f$\ as in the previous Proposition and assume that for every periodic center leaf $L$ the submanifolds $\Ws{L}$\ and $\Wu{L}$\ are complete. Then the same is true for every center leaf (i.e. the center foliation is complete).
\end{corollary}

\begin{proof}
The hypothesis imply, by Proposition \ref{bon} and the Main Theorem', that \Fc\ is uniformly compact. By the Previous proposition \Fc\ is complete.
\end{proof}

\begin{remark}
The hypothesis of dynamical coherence in Proposition \ref{unifimpcomp} is superfluous: uniform compactness imply dynamical coherence. This was proved in \cite{CompDynFol}, and also obtained using different methods by C. Pugh \cite{CompimplyDynCoh} and by C. Bonatti-D. Bohnet \cite{CompCenFolFinHol}.
\end{remark}

\subsection{Historical note.}  The history of the problem of deciding whether every compact foliation is uniformly compact can be traced back to G. Reeb who gave in his thesis an example of a flow on a non-compact manifold whose orbits were all periodic, but the time of return (which is proportional to the length of each leaf) was not locally bounded. This example led A. Haefliger to ask whether such type of behaviour could appear in a compact manifold,  or equivalently, if there could be an example of a compact manifold with a compact foliation having locally unbounded volume. Later D.B.A. Epstein proved, using a very sophisticated argument,  that in a compact three manifold this phenomenon could not happen (see \cite{Per3Man}). However in 1976 D. Sullivan gave an example of a compact flow in $S^5$\ where the time of return was not bounded (\cite{CounPer}), and a similar type of example was given by D.B.A. Epstein and E. Vogt in a manifold of dimension 4 (\cite{CounPer3}). As for the case of compact center foliations, recently A. Gogolev proved that such foliations are uniformly compact provided that the manifold has dimension less than six \cite{gogo}.

\section{Proof of the Main Theorem.}

Fix $f:M\rightarrow M$ partially hyperbolic diffeomorphism with compact center foliation $\Fc$. We are going to show that \Fc\ is uniformly compact provided that the holonomy of every $f$-periodic leaf is finite, thus establishing the Main Theorem. I would like to offer my thanks to R. Ures for his aid in the proof of this result, and to the referee for pointing me out inaccuracies in previous versions.

The proof will be achieved by using a variation of R. Bowen's construction of shadowing (Prop. 3.6 in \cite{EquSta}) to show that if $B_{0}$ is not empty then $B_{0}$ contains a periodic leaf. We then assume that 
$B_{0}\neq \emptyset$ and look at the set $B_{end}\subset B_{0}$ discussed in Section \ref{Sect. 3}. We remind the reader that this set is compact, non-empty, $f$-invariant and $\Fc|B_{end}$ is a locally trivial fibration.

Consider a differentiable bundle $N$ which is almost orthogonal to $E^c$  and such that $TM=N\oplus E^c$  (i.e. a differentiable approximation to $E^s\oplus E^u$). For $x\in M$ and $\alpha$ smaller than the injectivity radius of the exponential denote by $N_{\alpha}(x):=exp_x(\{v\in T_xN:\norm{v}\leq \alpha\})$.

\begin{lemma}\label{discoalfa}
There exists $\alpha>0$ such that if $x\in B_{end}$ then $L_x\cap N_{\alpha}(x)=\{x\}.$
\end{lemma}

\begin{proof}
For a given $x\in B_{end}$ compactness of $L_x$ implies the existence of $\alpha(x)$ satisfying the claim.
Since $B_{end}$ is locally trivial there exists $U\subset B_{end}$ saturated neighborhood of $x$ such that for every $y\in U,\ N_{\alpha(x)}(y)\cap L_y=\{y\}$.  Finally, the uniform $\alpha$ can be found by compactness of $B_{end}$. 
\end{proof}

To carry this proof we need some more preparations. For $r\geq 0$ and a manifold $N$ (equipped with a metric $\norm{\cdot}_N$) we denote by $d_{\mathcal{C}^r}$ the $\mathcal{C}^r$ distance in $Emb^r(N,M)$, the set of  $\mathcal{C}^r$ embeddings from $N$ to $M$.

\begin{definition}
Let $\epsilon$ be a  positive number. Two (embedded) submanifolds $N_1,N_2$ of $M$ are said to be $\mathcal{C}^r-\epsilon$ close if there exists a Riemannian manifold $(N,\norm{\cdot}_N)$ and two $\mathcal{C}^r$ embeddings $h_i: N\rightarrow M,\ i=1,2$ such that:
\begin{enumerate}
\item $h_1(N)=N_1,h_2(N)=N_2$.
\item $h_1^{\ast}\norm{\cdot}_M=h_2^{\ast}\norm{\cdot}_M=\norm{\cdot}_N$.
\item $d_{\mathcal{C}^r}(h_1,h_2)<\epsilon$.
\end{enumerate}
By a harmless abuse of language we will write $d_{\mathcal{C}^r}(N_1,N_2)<\epsilon$ as a shorthand for saying that the submanifolds $N_1,N_2$ are $\mathcal{C}^r-\epsilon$ close.
\end{definition}

\begin{remark}
If $N$ is compact the second condition is unnecessary as long as the metric in $N$ is fixed. 
\end{remark}

Proximity in this sense is very strong: we are requiring not only the points of the manifolds $N_1,N_2$ to be close inside $M$, but their parametrizations to be ($\mathcal{C}^r$) close. For example, if $d_{\mathcal{C}^0}(N_1,N_2)<\epsilon$ then their Hausdorff distance is less than $2\epsilon$, i.e. they are close in the Hausdorff distance. But the converse is false: consider the foliation by horizontal circles of the M\"{o}bius band and note that there exist circles $C$ of length $2$ arbitrary close in the Hausdorff distance to the central circle $C_0$ of length one (see figure \ref{Mobius}). However, $d_{\mathcal{C}^0}(C,C_0)\geq 1$ if $C\neq C_0$ (see figure \ref{Mobius}).

\begin{figure}[ht]
  \begin{center}
  \includegraphics[width=7cm]{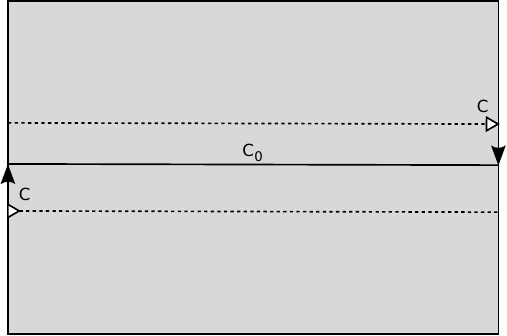}\\
  \caption{Horizontal foliation of the M\"{o}bius band}\label{Mobius}
\end{center}
\end{figure}

The presence of this phenomena  is explained by the fact that $C_0$ has non trivial holonomy. Although some of the results here can be presented in greater generality we content ourselves to the restricted case that we are studying.

For $\delta>0$ denote  

$$
\Gamma(\delta):=\{x\in M:\exists\ y\in B_{end}\text{ s.t. } d_{\mathcal{C}^0}(L_x,L_y)<\delta\}.
$$

\begin{lemma}\label{unicainter}
There exist numbers $\alpha,\delta,\epsilon>0$ such that 
\begin{enumerate}
\item If $x\in \Gamma(\delta)$ then $L_x\cap N_{\alpha}(x) = \{x\}$.
\item If $x,y\in \Gamma(\delta)$ and $d_{\mathcal{C}^0}(L_x,L_y)<\epsilon$ then for every $z\in L_x$ the intersection  $N_{\alpha}(z)\cap L_y$ consists of a unique point.  
\end{enumerate}
\end{lemma}
    
\begin{proof}
Let $\alpha$ be the number given in Lemma \ref{discoalfa}. By Corollary \ref{volumeEpstein} and compactness of $B_{end}$ there exists $0<\delta<\frac{\alpha}{3}$ such that if $d_{\mathcal{C}^0}(L_x,L_y)<\delta$ with $y\in B_{end}$ then
$$|vol(L_x)-vol(L_y)|<\frac{\min{vol|B_{end}}}{2}$$
(recall that $vol|B_{end}$ is continuous), and hence $L_x$ cannot be a non-trivial cover of $L_y$: in particular $L_x\cap N_{\alpha}(y')$ is a unique point for every $y'\in L_y$. Thus $L_x\cap N_{\frac{\alpha}{3}}(x')=\{x'\}$  for every $x'\in L_x$, proving the first part of the Lemma.  The second part follows from this.
\end{proof} 

From now on the numbers $\alpha$ and $\delta$ will be fixed and $\epsilon$ will be considered sufficiently small so the previous Lemma is valid. We then have the following Proposition (compare Lemma 3.4 in \cite{TeoErg} and Lemma 3.3 in \cite{ErgAnAc}).

\begin{proposition}\label{c0implicac1}
Given $\gamma>0$ there exists $\epsilon>0$ such that if  $x,y\in \Gamma(\delta)$ and $d_{\mathcal{C}^0}(L_x,L_y)<\epsilon$ then $d_{\mathcal{C}^1}(L_x,L_y)<\gamma$.
\end{proposition}

\begin{proof}

Observe that for every $x,y\in \Gamma(\delta)$ the leaves $L_x,L_y$ are homeomorphic. We fix $L$ the model of this homeomorphism class and consider a $\mathcal{C}^1$ embedding $h_x:L\rightarrow L_x$. Define $k:L\rightarrow L_y$ by $k(l)=N_{\alpha}(h_x(l))\cap L_y$: by the second part of the previous Lemma the map $k$ is well defined and one to one.

It follows that $k$ is a codimension zero submersion, hence an open embedding. As $L$\ is compact and connected, $k(L)=L_y$. Note that for every $z\in L_x$ there exists some vector $v_z\in T_zN, \norm{v_z}\leq \alpha$ such that
$$
N_{\alpha}(z)\cap L_y=\exp_z(v_z).
$$
Since the angle $\angle(N,E^c)$ is uniformly bounded from below and since $d_0\exp=Id$, by shrinking $\epsilon$ we get that $d_{\mathcal{C}^1}(h_x,k)<\gamma$, and this concludes the proof.
\end{proof}

\begin{remark} Note that in fact our proof gives that $k$ is isotopic to $h_x$, the isotopy being $h_t(z)=\exp_{h_x(z)}(tv_{h_x(z)})$.
\end{remark}

We now fix $0<\gamma_0<\frac{\alpha}{3}$ much smaller than the local product structure of \Fc, and for $0<\gamma\leq\gamma_0$ consider $0<\epsilon<\frac{\gamma}{2}$ satisfying the conclusion of the previous Proposition.

Suppose that $x,y\in \Gamma(\delta)$, $d_{\mathcal{C}^0}(L_x,L_y)<\epsilon$ and consider  
$\mathcal{C}^1$ embeddings $h_x,h_y:L\rightarrow M$ of $L_x,L_y$ such that $d_{\mathcal{C}^1}(h_x,h_y)<\gamma$. Observe that 
$$
\Wsl[\epsilon]{L_x}\subset \cup_{l\in L} N_{\frac{3\epsilon}{2}}(h_x(l)) \subset \Wsl[2\epsilon]{L_x}
$$
$$
\Wul[\epsilon]{L_y}\subset \cup_{l\in L} N_{\frac{3\epsilon}{2}}(h_y(l)) \subset \Wul[2\epsilon]{L_y}
$$
if $N$ is sufficiently close to $E^s\oplus E^u$ (and maybe shrinking $\gamma_0$). As the manifolds $\Wsl[\epsilon]{L_x},\Wul[\epsilon]{L_u}$ are transverse, we have that their intersection $F$ is a $c$-dimensional manifold tangent to $E^c$.

\begin{figure}[ht]
  \begin{center}
  \includegraphics[width=12cm]{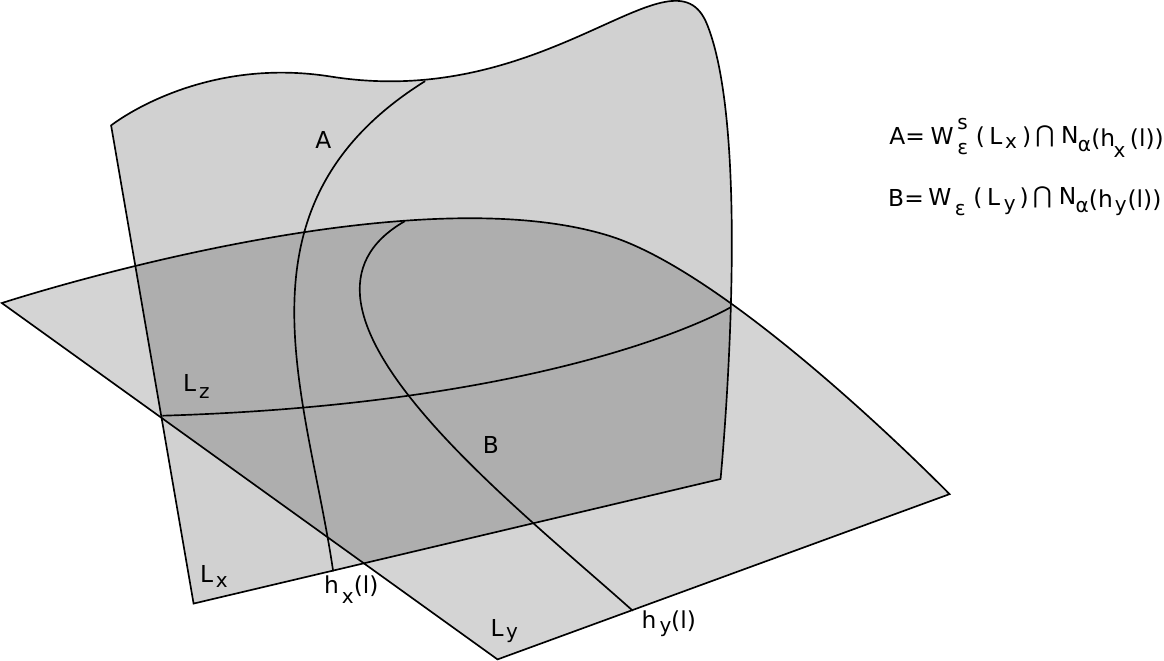}\\
  \caption{The leaf $L_z$.}\label{intersection}
\end{center}
\end{figure}

\emph{Claim}: For every $x'\in L_x$ the intersection $N_{\frac{3\epsilon}{2}}(x')\cap F$ consist of a unique point.

This is a direct consequence of Lemma \ref{unicainter}. We conclude then that $F$ consist of a single center leaf $L_z$, parametrized by the $\mathcal{C}^1$ embedding
$$
h_z(l):=N_{\frac{3\epsilon}{2}}(h_x(l))\cap F. 
$$
which is $\mathcal{C}^1-\gamma$ close to $h_x$. Likewise, the map
$$
h_z^0(l):=\Wsl[2\epsilon]{h_x(l)}\cap F 
$$
is a $\mathcal{C}^0$ parametrization of $L_z$, which is $\mathcal{C}^0-2\epsilon$ close to $h_x$. See figure \ref{intersection}. We have proved the following.

\begin{lemma}\label{coroGPS}
Given $0<\gamma\leq\gamma_0$ there exists $\epsilon>0$ such that if $x,y\in \Gamma(\delta)$, $d_{\mathcal{C}^0}(L_x,L_y)<\epsilon$. then there exists $z=z(x,y)$ such that the intersection $\Wsl[2\epsilon]{L_x}\cap \Wul[2\epsilon]{L_y}$ is a unique center leaf $L_z$. Furthermore,

\begin{enumerate}
\item $L_z$ is  $\mathcal{C}^{0}-2\epsilon$ close to $L_x,L_y$.
\item $L_z$ is  $\mathcal{C}^{1}-\gamma$ close to $L_x,L_y$.
\end{enumerate}
\end{lemma}

Ures' important observation is that for points $x,y\in B_0$ belonging to the same orbit the point $z$ inherits the same holonomy groups (stable and unstable), as explained next.

\begin{lemma}\label{mismaholo}
Given $0<\gamma<\gamma_0$ there exists $\epsilon>0$ such that if $x,y\in \Gamma(\delta)\cap B_0$ and
\begin{itemize}
\item $d_{\mathcal{C}^0}(L_x,L_y)<\epsilon$,
\item $y=f^n(x)$
\end{itemize}
then the point $z(x,y)$ given in the previous Lemma is also in $B_{0}$. Furthermore $|G^u_z|=\oo\Leftrightarrow |G^u_x|=\oo$ and $|G^s_z|=\oo\Leftrightarrow |G^s_x|=\oo$.
\end{lemma}

\begin{proof}
By dynamical coherence, and since we are assuming that $\gamma_0$ is much smaller than the local product structure constant of $\Fc$ we have that a small neighborhood $A\subset \Wu{L_x}$ of $L_x$ is mapped homeomorphically by the stable foliation to a neighborhood $A'\subset \Wu{L_z}$ of $L_z$. We conclude then that $G^u_x$ is infinite if and only if $G_z^u$ is infinite. Similarly, $G^s_y$ is infinite if and only if $G_z^s$ is infinite.
As $y=f^n(x)$ and $f$-preserves all foliations $\Fc,\Fcs,\Fcu$ we have that $G^s_y$ is isomorphic to $G^s_x$. By Proposition \ref{producto} at least one of the groups  $G^s_x,G^u_x$ is infinite, and thus again using the same Proposition we obtain that $G_z$ is infinite.
\end{proof}

We are ready now to prove the main result.

\begin{theoremsn}\label{A51}
If the Bad set $B_0$ of \Fc\ is non-empty, then there exist a $f$-periodic leaf contained in $B_0$.
\end{theoremsn}

\begin{proof}
Let $\lambda=\max\{\norm{df|E^s},\norm{df^{-1}|E^u}\}$. As we are looking for periodic leaves, by taking a power of $f$ it is not loss of generality to assume that $\lambda<\frac{1}{2}$. Fix $0<\gamma<\gamma_0$ and consider its corresponding $\epsilon>0$ with $2\epsilon(\frac{1}{1-\lambda}+1)<\epsilon_0$.  

Now take $\rho>0$ such that for $x,y \in \Gamma(\delta),\ d_{C_{0}}(L_x,L_y)<\rho $ it holds 
that for every $z\in \Gamma(\delta)\cap \Wsl[\lambda\epsilon]{L_y}$ the intersection  
$$
\Wsl[\epsilon]{L_x}\cap \Wul[\epsilon]{L_z}
$$
consist of a (unique) center leaf. This makes sense since $E^{cs}, E^{cu}$ are transverse, and since $\mathcal{C}^0$ close-by center leaves in $\Gamma(\delta)$ are $\mathcal{C}^1$ close, by election of $\epsilon$ (see Proposition \ref{c0implicac1} and Lemma \ref{coroGPS}).

The set $B_{end}$ is an invariant locally trivial compact fibration, hence we can find $n>0$ and $x,y=f^n(x)\in B_{end}$ such that $d_{\mathcal{C}^0}(L,f^nL)<\rho$, where $L=L_x$. By replacing $f$ with $f^{-1}$ it is also no loss of generality to assume that $|G_x^u|=\oo$, and thus by Lemma \ref{mismaholo} for the corresponding point $z(x,y)$ we have $|G_z^u|=\oo$. 

Fix an integer $k>0$ and consider the $\rho$-pseudo orbit of leaves $\{F_j\}_{j=1}^{k}$ with
$$
F_j=f^{j\, mod\, n}(L)\subset B_{end}.
$$
Note that the unstable holonomy group of $F_j$ is infinite. Now define recursively $\{F_j'\}_{j=1}^{k}$ as follows. Set $F'_0:=L$ and assume that we have already determined $F'_0,\ldots, F'_j$ with  $j<k$ satisfying 

\begin{enumerate}

\item $F'_j=\Wsl[\epsilon]{F_j}\cap \Wul[\epsilon]{fF_{j-1}'}$
\item $d_{\mathcal{C}^0}(F_j,F_j')<\epsilon$
\item $|G^{u}_{x_j'}|=\oo$, where $x_j'\in F_j'$.

\end{enumerate}

By composing the corresponding parametrizations with $f$ we deduce that  $d_{\mathcal{C}^0}(fF_j,fF_j')<\lambda\epsilon$, and since $d_{\mathcal{C}^0}(F_{j+1},fF_j)<\rho$ we conclude using Lemma \ref{coroGPS} that 
$$
F'_{j+1}=\Wsl[\epsilon]{F_{j+1}}\cap \Wul[\epsilon]{fF_j'}
$$
is a well defined center leaf; by Lemma \ref{mismaholo} it has infinite unstable holonomy group. This permits us to continue the induction and define $F_j'$ for $0,\ldots,k$.

Observe that for every $j$, $F_{j+1}'\subset\Wul[\epsilon]{fF_j'}$ and $d_{\mathcal{C}^0}(fF_j',F_{j+1}')<\epsilon$, hence  $f^{-1}(F_{j+1}')\subset \Wul[\lambda\epsilon]{F_j'}$ and $d_{\mathcal{C}^0}(F_j',f^{-1}(F_{j+1}'))<\lambda\epsilon$. We define $F'=f^{-k}(F_k')$ and conclude by recurrence that
$$
d_{\mathcal{C}^0}(f^jF',F_j')<\frac{\epsilon}{1-\lambda} \quad \forall\ 0\leq j\leq k,
$$
which in turn implies
$$
d_{\mathcal{C}^0}(f^jF',F_j)<\frac{\epsilon}{1-\lambda}+\epsilon<\frac{\epsilon_0}{2}.
$$

We now consider the bi-infinite sequence of leaves $\{F_j=f^{j\, mod\, n}(L)\}_{j\in \mathbb{Z}}$ and construct (by shifting) for every $k\geq 1$ a leaf $L_k\subset B_0$ such that 
$$
\forall -kn\leq j\leq kn,\quad  \dz{f^jL_k, F_j}\leq \frac{\epsilon_0}{2}.
$$
By compactness of $B_{0}$ the leaves $L_{k}$ have an accumulation point on a leaf $L'\subset B_{0}$. 

We claim that $L'$ is $\mathcal{C}^0-\epsilon_0$ close to $L$. It suffices to show that for every $x'\in L$  the intersection $L'\cap N_{\alpha}(x')$ consists of a unique point. If this were not the case, by continuous dependence on compact sets of the foliation \Fc\ we could find a leaf $L_k$ intersecting $N_{\alpha}(x')$ more than once, a contradiction to Lemma \ref{unicainter} since $L_k\subset \Gamma(\delta)$.  Likewise, 
$$
d_{\mathcal{C}^0}(f^jL',F_j)< \epsilon_0 \quad \forall j\in \mathbb{Z}. 
$$

We deduce that 
$$
d_{\mathcal{C}^0}(f^jL',f^j(f^nL'))< 2\epsilon_0 \quad \forall j\in \mathbb{Z}. 
$$
Apply parts (c) and (e) of Theorem 6.1 in \cite{HPS} with $V=\cup_{F\in\Fc} F, i:V\rightarrow M$ the natural inclusion to conclude that 
$$
f^nL'\subset\Wsl[2\epsilon_0]{L'}\cap \Wul[2\epsilon_0]{L'}
$$ 
and by (uniform) transversality of $E^{cs},E^{cu}$ the previous intersection consists only of the leaf $L'$ if $\epsilon_0$ is sufficiently small. Hence $f^nL'=L'$ and we are done.
\end{proof}

\begin{remark}
 With the same reasoning one can establish that sufficiently close pseudo-orbits of leaves $\{L_k\}_k$ in $B_{end}$ with isomorphic stable and unstable holonomy groups can be shadowed by a leaf in $B_0$. The notation becomes more cumbersome, and since we will not use this fact in this article, we have opted to present the proof of the restricted version above.
\end{remark}

\section{Concluding Remarks}

The fact of the center foliation of a partially hyperbolic diffeomorphism being uniformly compact  has implications both on the geometry of the foliation and on the dynamics of the corresponding map. We give here a brief discussion of some of these consequences.

A  partially hyperbolic diffeomorphism  $f$\ is said to be plaque expansive if there exists $\epsilon>0$\ such that if $\{x_n\}_{n\in\mathbb{Z}},\{y_n\}_{n\in\mathbb{Z}}$\ are two pseudo-orbits subordinated to the center foliation\footnote{That is $x_{n+1}\in \Wc{f(x_n)}$\ for every $n\in\mathbb{Z}$\ (and likewise for $\{y_n\}_{n\in\mathbb{Z}})$.} satisfying for every $n\in\mathbb{Z}$\
$$
d(x_n,y_n)<\epsilon
$$

then $x_0\in\Wc{y_0}$. As explained in \cite{HPS}, the fact that $f$\ is plaque expansive implies that the pair $(f,\Fc)$\ is stable in the following sense: there exist a $\mathcal{C}^1$-neighborhood $U$\ of $f$\ such that if $g\in U$\ then
\begin{enumerate}
\item $g$\ is partially hyperbolic, dynamically coherent and plaque expansive.
\item There exists an homeomorphism $h:M\rightarrow M$\ close to the identity such that $h^{\ast}(\Fc_f)=\Fc_g$.
\end{enumerate}

In the case when \Fc\ is uniformly compact it is not too hard to prove that $f$ is plaque expansive. This is achieved by applying Reeb's Stability Theorem as in the proof of Proposition \ref{unifimpcomp}. See 
Proposition 13 in \cite{HolFolRev} for the complete argument, or Section 6 in \cite{CompCenFolFinHol} for a different proof\footnote{P. Berger pointed me out that this result also follows from
a modification of his results obtained in Appendix C of \cite{persistence}.}.  Hence:

\begin{theorem}
If \Fc\ is uniformly compact then $(f,\Fc)$ is stable.
\end{theorem}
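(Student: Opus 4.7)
The plan is to reduce the theorem to plaque expansivity of $(f,\Fc)$ and then to invoke the classical structural stability theorem for plaque expansive partially hyperbolic systems (Theorem 7.1 and 7.4 in \cite{HPS}). More precisely, once we show that $(f,\Fc)$ is plaque expansive, the HPS theory yields a $\mathcal{C}^1$-neighbourhood $U$ of $f$ such that every $g\in U$ is partially hyperbolic (by openness of partial hyperbolicity), admits an invariant center foliation conjugate to $\Fc$ via a homeomorphism close to the identity, and is itself plaque expansive. Both points in the definition of stability are then accounted for.

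The heart of the matter is therefore to establish plaque expansivity. The key observation is that uniform compactness of $\Fc$ makes the quotient manageable. Indeed, by Proposition \ref{voleqhol} every leaf has finite holonomy, so by Proposition \ref{unifcompeqhaus} the quotient $X:=M/\Fc$ is Hausdorff; by Proposition \ref{posibilidadcompacta} it is also compact, hence metrizable with the metric $d_X(L,L')=\inf\{d(x,y):x\in L,y\in L'\}$ used in Section \ref{Sect. 5}. As recorded in Section \ref{Sect. 5} (Proposition 8.2 of \cite{HPS}), the induced map $g:X\rightarrow X$ is expansive; let $c>0$ be its expansivity constant.

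Now take $\epsilon<c$ and suppose $\{x_n\}_{n\in\mathbb{Z}},\{y_n\}_{n\in\mathbb{Z}}$ are two $\Fc$-subordinated pseudo-orbits with $d(x_n,y_n)<\epsilon$ for every $n\in\mathbb{Z}$. Let $\pi:M\rightarrow X$ be the quotient projection. Since $x_{n+1}\in\Wc{f(x_n)}$, the points $x_{n+1}$ and $f(x_n)$ lie on the same leaf, so $\pi(x_{n+1})=\pi(f(x_n))=g(\pi(x_n))$; likewise $\pi(y_{n+1})=g(\pi(y_n))$. Thus the projected sequences are genuine $g$-orbits through $\pi(x_0)$ and $\pi(y_0)$. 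From the definition of $d_X$ we have $d_X(g^n\pi(x_0),g^n\pi(y_0))\leq d(x_n,y_n)<\epsilon<c$ for every $n\in\mathbb{Z}$, so expansivity of $g$ forces $\pi(x_0)=\pi(y_0)$, that is $x_0\in\Wc{y_0}$. This proves plaque expansivity.

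The step I expect to be most delicate is verifying carefully that the quotient-level expansivity genuinely yields plaque expansivity in the sense used in \cite{HPS}, since the precise regularity with which the invariant foliation $\Fc_g$ for nearby $g$ is produced (and the way the conjugating homeomorphism $h$ depends on $g$) relies on the full HPS machinery; aside from that, once the projection argument is in place the rest is a direct application of standard theorems.
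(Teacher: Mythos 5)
Your top‑level plan --- reduce stability to plaque expansivity of $(f,\Fc)$ and then invoke the HPS structural stability theorem for normally hyperbolic, plaque‑expansive foliations --- is exactly the route the paper takes. The divergence is in how plaque expansivity is established. The paper obtains it by applying Reeb's Stability Theorem (as in the proof of Theorem \ref{completeness}), pointing to Proposition 13 of \cite{HolFolRev} or Section 6 of \cite{CompCenFolFinHol} for the full argument. You instead reduce plaque expansivity to expansivity of the induced map $g$ on the full leaf space $M/\Fc$, which is a clean and correct reduction: if $\{x_n\},\{y_n\}$ are $\Fc$‑subordinated pseudo‑orbits then $\pi(x_n),\pi(y_n)$ are genuine $g$‑orbits and $d_X(\pi(x_n),\pi(y_n))\le d(x_n,y_n)$, so quotient expansivity forces $\pi(x_0)=\pi(y_0)$. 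The caveat is the input: you cite Proposition 8.2 of \cite{HPS} for expansivity of $g$ on $M/\Fc$, but in the paper that proposition is invoked only for $g$ on $B_{end}/\Fc$, and there $\Fc|B_{end}$ is a \emph{locally trivial fibration} by Proposition \ref{propertiesBalfa}. Under uniform compactness alone, leaves may have finite nontrivial holonomy, so $M/\Fc$ is only an orbifold‑like Hausdorff space, not evidently within the scope of that proposition as used in the paper. The claim is very likely true (normal hyperbolicity transverse to a compact foliation with Hausdorff, metrizable leaf space should give expansivity of $g$), but justifying it honestly requires a tubular‑neighbourhood argument around each leaf --- which is exactly Reeb's Stability Theorem, i.e.\ the tool the paper reaches for directly. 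So the two proofs are genuinely different in emphasis, but yours has a cited step whose applicability needs to be checked or replaced with the Reeb‑stability argument.
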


In fact the pairs $(f,\Fcs),(f,\Fcu)$\ are also stable. See Section 4 in \cite{CompDynFol}.

\begin{definition}
A foliation $\mathcal{F}$\ on a manifold $M$ is said to be of \emph{uniform type} if all leaves of $\mathcal{F}$\ have homeomorphic universal covers.
\end{definition}

For example if $(\phi_t)_t$\ is a flow on $M$ without singularities,  then the foliation induced by the orbits of $(\phi_t)_t$\ is of uniform type. More generally, if $G$\ is a Lie group acting on $M$ and the action is effective and locally free then the orbit-foliation is of uniform type.

\begin{proposition}\
Let $f:M\rightarrow M$ be a partially hyperbolic diffeomorphism with uniformly compact center foliation \Fc and $L\in\Fc$. Then the foliations $\Fc|\Ws{L}$ and $\Fc|\Wc{L}$ are 
of uniform type.
\end{proposition}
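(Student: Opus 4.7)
The plan is to combine the Local Model for uniformly compact foliations with a simple openness-plus-connectedness argument. First, by Theorem \ref{completeness}, the hypothesis that $\Fc$ is uniformly compact implies that $f$ is dynamically coherent and that $\Fc$ is complete. Consequently, for any $x\in L$ one has $\Ws{L}=\Wcs{x}$ and $\Wu{L}=\Wcu{x}$, so $\Ws{L}$ and $\Wu{L}$ are leaves of the continuous foliations $\Fcs$ and $\Fcu$; in particular they are connected submanifolds of $M$ saturated by center leaves.

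Next, I would apply the Local Model at an arbitrary center leaf $L'\subset \Ws{L}$. By Proposition \ref{voleqhol} the holonomy group $G_{L'}$ is finite, so Theorem \ref{localmodel} furnishes a saturated open neighbourhood $U\supset L'$ in $M$ homeomorphic to a suspension $\widehat{L'}\times_H V$, where $\widehat{L'}$ is a finite regular covering of $L'$ with (finite) deck group $H$ acting on $\widehat{L'}\times V$ by the product action, and the center leaves of $U$ are exactly the quotients $\widehat{L'}/H_v$ for the stabilizers $H_v\leq H$. Thus every center leaf in $U$ admits $\widehat{L'}$ as a finite covering, and the standard fact that a finite connected cover of a manifold has the same universal cover as the base yields that each such leaf has universal cover homeomorphic to $\widetilde{L'}$.

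I would finish with an equivalence-relation argument. Define on $\Ws{L}$ the relation $y\sim z$ iff $\widetilde{L_y}$ is homeomorphic to $\widetilde{L_z}$. The previous paragraph shows each class is open in $\Ws{L}$: for $y\in \Ws{L}$, the saturated neighbourhood $U_y$ furnished by the Local Model intersects $\Ws{L}$ in a non-empty open subset (here completeness keeps the nearby leaves inside $\Ws{L}$), on which every center leaf has universal cover homeomorphic to $\widetilde{L_y}$. Since $\Ws{L}$ is connected, there is a single equivalence class, which is precisely the conclusion that $\Fc|\Ws{L}$ is of uniform type. The same argument applied to $f^{-1}$ delivers the statement for $\Fc|\Wu{L}$.

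The main point of the argument is the Local Model, which is already established in the paper; after Theorem \ref{completeness} ensures that $\Ws{L}$ and $\Wu{L}$ are genuine connected, center-saturated manifolds, the rest is bookkeeping. The only minor verification is that a finite connected cover $\widehat{L'}\to L'/H_v$ of compact manifolds induces a homeomorphism on universal covers, which follows from the standard lifting criterion.
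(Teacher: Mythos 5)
Your proof is correct and takes a genuinely different route from the paper. The paper's argument is dynamical: given $L'\subset \Ws{L}$, it iterates forward so that the contraction along stable leaves brings $f^n(L)$ and $f^n(L')$ into a \emph{single} Reeb neighbourhood, invokes the Local Model (Theorem \ref{localmodel}) once there, and pulls back by $f^{-n}$. You instead argue statically: you apply the Local Model at every leaf of $\Ws{L}$ to see that ``having the same universal cover'' is an open equivalence relation, and then cite connectedness. Both hinge on the Local Model and on the completeness furnished by Theorem \ref{completeness} (which is what makes $\Ws{L}=\Wcs{x}$ a connected, center-saturated manifold so that the restricted foliation is even well defined). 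What your version buys is that it avoids iteration altogether and in fact proves the a priori stronger statement that $\Fc$ is of uniform type on each connected component of $M$ whenever $\Fc$ is uniformly compact, which subsumes both the $\Ws{L}$ and $\Wu{L}$ claims simultaneously; what the paper's version buys is a shorter derivation from a step already present in the proof of Theorem \ref{completeness}. One small bookkeeping remark: in your openness step you should note that $L_z$ being a finite connected cover of the compact $L_y/H_v$ and $\widehat{L_y}$ being a finite connected cover of $L_y$ force $\widetilde{L_z}\cong\widetilde{L_y}$ because finite covers of compact connected manifolds share their universal cover --- you do say this, and it is the correct justification.
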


\begin{proof}
This follows from the proof of Proposition \ref{unifimpcomp}: if $L'\in \Ws{L}$, then by some positive iterate $n$ both leaves $f^n(L)$\ and $f^n(L')$\ are in the same Reeb neighborhood,
where by the discussion of Section 2 all leaves have the same universal covering. Apply $f^{-n}$ to obtain the claim. Similarly for $L'\in \Wu{L}$.
\end{proof}

\begin{corollary}
Let $f:M\rightarrow M$ be a partially hyperbolic diffeomorphism with uniformly compact center foliation \Fc\ and assume either that
\begin{enumerate}
 \item $f$ is \emph{centrally transitive}, meaning that there exists a center leaf $L$ whose orbit is dense, or
 \item $f$ is \emph{accessible}, meaning that given $x,y\in M$\ there exists a piecewise $\mathcal{C}^1$ curve $c:[0,1]\rightarrow M$\ whose tangent is always contained in $E^s$\ or $E^u$\ and such that $c(0)=x,c(1)=y$.
\end{enumerate}
Then \Fc\ is of uniform type.
\end{corollary}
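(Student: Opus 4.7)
My plan is to show that any two center leaves have homeomorphic universal covers, using local constancy of the universal-cover type (a direct consequence of Reeb's Stability Theorem \ref{reebpotente} combined with uniform compactness) and the preceding Proposition, which upgrades this local statement to whole stable and unstable saturations of a leaf. Propagation from leaf to leaf will then be supplied by the dense $f$-orbit in the centrally transitive case and by $su$-chains in the accessible case.

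In the centrally transitive case, let $L_0$ be a center leaf with dense $f$-orbit and fix an arbitrary $L'\in \Fc$. By uniform compactness the holonomy of $L'$ is finite (Proposition \ref{voleqhol}), so Reeb's Stability Theorem provides a saturated open neighborhood $U$ of $L'$ in which every leaf is a finite cover of $L'$, and hence has the same universal cover as $L'$. By density of $\bigcup_n f^n L_0$ some iterate $f^n L_0$ meets $U$, and saturation of $U$ forces $f^n L_0 \subset U$. Since $f^n:L_0\to f^n L_0$ is a diffeomorphism, $L_0$ and $L'$ share a common universal cover, and as $L'$ was arbitrary this proves that $\Fc$ is of uniform type.

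In the accessible case, fix $L_1, L_2 \in \Fc$ and points $x_i \in L_i$, and take an $su$-path $c:[0,1]\to M$ joining $x_1$ to $x_2$. Decomposing $c$ into its finitely many monotonic arcs produces a sequence $x_1 = z_0, z_1, \ldots, z_k = x_2$ with every consecutive pair contained in a single leaf of $\Fs$ or $\Fu$; equivalently, for each $i$ either $L_{z_i}\subset \Ws{L_{z_{i-1}}}$ or $L_{z_i}\subset \Wu{L_{z_{i-1}}}$. The preceding Proposition asserts that each of $\Fc|\Ws{L_{z_{i-1}}}$ and $\Fc|\Wu{L_{z_{i-1}}}$ is of uniform type, so $L_{z_{i-1}}$ and $L_{z_i}$ have homeomorphic universal covers. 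Chaining these identifications through the finite sequence yields that $L_1$ and $L_2$ share a universal cover, concluding the proof.

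No serious obstacle is expected: Reeb's Stability Theorem and the preceding Proposition already carry all of the geometric content, and both hypotheses reduce the statement to an elementary chaining argument. The only points requiring a brief remark are that a finite covering space of a leaf shares a universal cover with that leaf (standard), and that $U$ being saturated together with $f^n L_0 \cap U \neq \emptyset$ forces $f^n L_0 \subset U$ (immediate from the definition of saturation).
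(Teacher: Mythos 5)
Your proof is correct and supplies exactly the detail the paper leaves out (the paper simply says ``The proof is clear''). The route you take is the natural one: in the centrally transitive case, Reeb stability around an arbitrary leaf $L'$ plus density of the $L_0$-orbit; in the accessible case, decomposing an $su$-path into monotone arcs and chaining through the preceding Proposition.

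One small point worth making explicit in the accessible case: the step from $z_i\in \Ws{z_{i-1}}$ to $L_{z_i}\subset \Ws{L_{z_{i-1}}}$ requires that $\Ws{L_{z_{i-1}}}$ be saturated by $\Fc$, i.e.\ complete. This is not automatic, but it is guaranteed here by Theorem~\ref{completeness} (uniform compactness implies completeness), which is the same fact that underlies the preceding Proposition. With that observation made, the chaining argument goes through as you wrote it.
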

The proof is clear.

To conclude, we consider a  partially hyperbolic diffeomorphism $f:M\rightarrow M$  with uniformly compact center foliation \Fc, and denote by $X=M/\Fc$ and $g:X\rightarrow X$  
the map induced by $f$. It is proven in \cite{CompCenFolFinHol} (Theorem 2) that $g$ has the \emph{pseudo-orbit tracing property}, meaning that any sufficiently small $g$-pseudo-orbit can be
shadowed by a true orbit, although the orbit may not be unique (cf. the example after Proposition \ref{producto}).  An improvement of this result can be obtained in the following 
case:

\begin{theorem}
Assume furthermore that $E^u$ is one dimensional and that the center foliation is without holonomy. Then $X$ is a $c$-dimensional torus and $g$ is conjugate to a linear
Anosov diffeomorphism.
\end{theorem}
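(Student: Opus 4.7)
The plan is to realize $g: X \to X$ as a codimension one topologically Anosov homeomorphism on a closed topological manifold and then invoke the classification theorem of Franks--Newhouse, in the topological form due to Hiraide and Vieitez. The first step is to establish that $X$ is a closed connected topological manifold of dimension $s+1$, where $s = \dim E^s$. Since $\Fc$ is uniformly compact and, by hypothesis, every leaf has trivial holonomy, Reeb's Stability Theorem (Theorem \ref{reebpotente}, taking $E=M$) produces around each leaf $L$ a saturated open neighborhood homeomorphic to $L \times V$, with $V$ an open transversal disc of dimension $s+1$. Proposition \ref{posibilidadcompacta} then forces $X = M/\Fc$ to be compact and Hausdorff, and these local product neighborhoods equip it with a topological manifold structure; equivalently, $\pi: M \to X$ becomes a locally trivial fiber bundle with compact connected fiber.

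Next I would push the strong foliations down to $X$. By Theorem \ref{completeness}, the uniform compactness of $\Fc$ forces its completeness, so for every $L \in \Fc$ both $\Ws{L}$ and $\Wu{L}$ are $\Fc$-saturated. Hence the sets $\widetilde{W}^{\sigma}(\pi(L)) := \pi(W^{\sigma}(L))$ for $\sigma \in \{s,u\}$ define two transverse continuous foliations $\widetilde{\Fs}, \widetilde{\Fu}$ of $X$, both invariant under $g$. Transporting the contraction/expansion estimates from $M$, $g$ uniformly contracts distances within leaves of $\widetilde{\Fs}$ and uniformly expands them within leaves of $\widetilde{\Fu}$; the latter are one-dimensional because $\dim E^u = 1$.

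The third step is to verify that $g$ is topologically Anosov. Expansivity of $g$ is Proposition 8.2 of \cite{HPS}. For shadowing, the triviality of the holonomy group at every leaf promotes the local product structure of Corollary \ref{coroGPS} to a global property on $X$: for any pair of nearby center leaves $L, L'$ the intersection $\Wsl[\gamma]{L} \cap \Wul[\gamma]{L'}$ is a single center leaf, projecting to a local product chart on $X$. Bowen's pseudo-orbit construction, exactly as used on $B_{end}$ in Section \ref{Sect. 5}, then yields a genuine shadowing property for $g$ on all of $X$. Together with the two transverse invariant continuous foliations, with $\widetilde{\Fu}$ one-dimensional, this makes $g$ a codimension one topologically Anosov homeomorphism of $X$. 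The topological version of the Franks--Newhouse classification (due to Hiraide and Vieitez) then asserts that such a homeomorphism is topologically conjugate to a hyperbolic toral automorphism, so $X \cong \mathbb{T}^{s+1}$ and $g$ is conjugate to a linear Anosov diffeomorphism.

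The main obstacle is the shadowing step: one must upgrade the argument of Section \ref{Sect. 5}, where shadowing was run on the bad set $B_{end}$ as an auxiliary device, to a global statement on $X$ under the no-holonomy hypothesis, and fix a metric on $X$ compatible both with the quotient topology and with the hyperbolic estimates inherited from $M$ so that the final classification theorem applies rigorously in the topological category.
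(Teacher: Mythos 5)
The paper does not actually prove this theorem: it states it and attributes it outright to A.~Gogolev \cite{gogo} and to C.~Bonatti and D.~Bohnet \cite{CompCenFolFinHol2}, and it cites Theorem~2 of \cite{CompCenFolFinHol} for the pseudo-orbit tracing property of $g$ on $X$ already in the general uniformly compact case. So there is no in-text argument against which your sketch can be checked for being ``the same'' or ``different''; what you have produced is an independent outline of a proof for a result the paper simply quotes.

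That said, your outline is aligned with the strategy actually used in the cited works: use Reeb stability with everywhere-trivial holonomy to give $X=M/\Fc$ the structure of a closed topological manifold with a locally trivial $L$-bundle $\pi\colon M\to X$, observe that completeness (Theorem~\ref{completeness}) makes $\Ws{L}$, $\Wu{L}$ saturated so that $\Fs,\Fu$ descend to transverse invariant foliations of $X$ with $\widetilde{\Fu}$ one-dimensional, use expansivity of $g$ (Proposition~8.2 of \cite{HPS}) plus local product structure and shadowing to conclude that $g$ is topologically Anosov of codimension one, and then apply the Hiraide--Vieitez topological version of the Franks--Newhouse classification. One correction: the dimension of $X$ is $s+u = s+1$, as you write ($\mathbb{T}^{s+1}$), not $c$; the statement's ``$c$-dimensional torus'' is either a typo or a nonstandard use of $c$ for the codimension of $\Fc$. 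Two points you flag as gaps are genuine points of care but are not missing ideas: the shadowing you worry about is already available globally from \cite{CompCenFolFinHol} without any extension argument, and the metric-compatibility issue is handled in the cited works by the quotient metric $d_X$ already introduced in Section~\ref{Sect. 5} together with the uniform cone estimates inherited through the bundle $\pi$. To turn your outline into a full proof one must still verify carefully that $g$ meets the exact hypotheses of whichever version of Hiraide's theorem is invoked (in particular that the quotient metric makes the contraction/expansion on $\widetilde{\Fs}/\widetilde{\Fu}$ uniform), which is precisely the content of the technical sections of \cite{gogo} and \cite{CompCenFolFinHol2}.
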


This Theorem was obtained by A. Gogolev \cite{gogo} and generalized by D. Bohnet \cite{CompCenFolFinHol2}, who removed the condition of trivial holonomy.

\vspace{0.2cm}

\textbf{Acknowledgments}:  This article contains results which I obtained in my thesis \cite{CompDynFol} under the supervision of Charles Pugh and Michael Shub. I express here my deep gratitude to them, for everything that they taught me and all the support that they gave during these years. I also want to thank Federico Rodriguez-Hertz and Ra\'ul Ures for their help in the preparation of this article. I am indebted to A. Gogolev who patiently pointed me out the necessity of more careful arguments than the ones appearing in the first versions of this work, and to Christian Bonatti who besides of finding mistakes in my previous versions gave me several suggestions which significantly contributed to the improvement of the paper. Finally, I would like to thank Enrique Pujals for helpful discussions, for reading the drafts and specially for all his support.

The author was partially funded by a C.N.P.Q. grant.

\section*{Appendix: Reeb's Stability Theorem}

Let $\F$ be a $\mathcal{C}^{1,0}$ codimension $q$ foliation on a manifold $M$, and $E$ a locally compact saturated set of $M$. We use the notation explained in Section 2, and in particular for a leaf $L_x$ we denote by $p_x:W_x\rightarrow L_x$ the differentiable bundle of $q$-open discs $D(y)=p^{-1}_x(y)$. Here we discuss the following result.

\begin{theorem}[Reeb's Stability Theorem]
Assume that $x\in E$ is such that its leaf $L_x$ is compact and $G_x|E$ is finite. Then there exist arbitrarily small relatively open sets $U\subset E$ such that
\begin{enumerate}
\item $U$ is saturated, and if $V:=U\cap D(x)$ there exists a finite covering $\widehat{L}$ of $L_x$ such that $U$ is homeomorphic to a suspension $S=\widehat{L}\times_{G_x|E} V$ by a homeomorphism which sends the leaves of $\F|U$ to the leaves of $S$.
\item For every $y\in U$ the map $p|:L_y\rightarrow L_x$ is a covering map with less than equal $|G_x|E|$ number of sheets. In particular for every $y\in U$ the group $G_y|E$ is finite.
\end{enumerate}
\end{theorem}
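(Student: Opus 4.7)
The plan is to build the suspension space $\widehat{L} \times_H V$ explicitly and exhibit a homeomorphism onto a saturated neighbourhood of $L_x$ in $E$. Write $H := G_x|E$ (finite by hypothesis), let $\rho : \pi_1(L_x, x) \to H$ be the holonomy homomorphism, and let $p : (\widehat{L}, \hat{x}) \to (L_x, x)$ be the covering corresponding to $\ker \rho$; then $H$ acts freely on $\widehat{L}$ by deck transformations with quotient $L_x$, and $\widehat{L}$ is a compact manifold because $H$ is finite and $L_x$ is compact. For $V$ arbitrarily small I aim to construct an explicit homeomorphism $\widehat{L} \times_H V \to U \subset E$ onto such a neighbourhood.

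Since $\pi_1(L_x, x)$ is finitely generated and $\rho$ has finite image, a finite set of generating loops gives finitely many holonomy germs at $x$ that represent all of $H$. Applying Lemma \ref{coveringleaves} along these loops (and using compactness of $L_x$), I will find a relatively open neighbourhood $V \subset D(x) \cap E$ of $x$ on which every $h \in H$ is realised as a homeomorphism $V \to V$, giving a left action of $H$ on $V$. Then I define $\Phi : \widehat{L} \times V \to E$ by: for $(\hat{\ell}, v)$, pick any path $\hat{\gamma}$ from $\hat{x}$ to $\hat{\ell}$ in $\widehat{L}$, project to $\gamma := p \circ \hat{\gamma}$ in $L_x$, and let $\Phi(\hat{\ell}, v)$ be the endpoint of the holonomy transport of $v$ along $\gamma$ (via the disc bundle $p_x : W_x \to L_x$). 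This is well-defined because two choices of $\hat{\gamma}$ differ by a loop in $\widehat{L}$ whose projection lies in $\ker \rho$, hence acts trivially on $V$; it is equivariant for the $H$-action $(\hat{\ell} \cdot h, v) \sim (\hat{\ell}, h \cdot v)$, because a loop in $\widehat{L}$ representing the deck transformation $h$ projects to a loop realising $\rho^{-1}(h)$, whose holonomy on $V$ is $h$. Hence $\Phi$ descends to $\bar{\Phi} : \widehat{L} \times_H V \to E$.

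I then set $U := \Phi(\widehat{L} \times V)$, which is saturated by construction, and argue that $\bar{\Phi}$ is a homeomorphism onto $U$ and that $U$ is relatively open in $E$. Using Lemma \ref{coveringleaves} with $n = |H|$, I shrink $V$ once more so that for every $y \in V$ either $p : V \cap L_y \to L_x$ is more than $|H|$-to-one, or it is a $k$-to-one covering with $k \leq |H|$; the first alternative is then excluded for $y \in E$ by observing that any intersection of $L_y$ with $D(x)$ is obtained by holonomy transport from $y$, and this $H$-orbit has size at most $|H|$. Consequently every leaf in $U$ is a covering of $L_x$ of degree $\leq |H|$. Injectivity of $\bar{\Phi}$ follows because two preimages of the same point yield a loop in $L_x$ whose holonomy identifies the two $V$-coordinates; openness of $\bar{\Phi}$ follows from continuous dependence of holonomy transport on the initial point. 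The horizontal foliation $\{\widehat{L} \times \{v\}\}_v$ is carried onto $\F|U$ by construction. Part (2) is then immediate: a leaf through $v \in V$ equals $\Phi(\widehat{L} \times \{v\})$, which factors through $\widehat{L}/H_v$ for the stabiliser $H_v \leq H$, a covering of $L_x$ of degree $|H|/|H_v| \leq |H|$, and the restricted holonomy $G_y|E$ injects into $H$, so is finite.

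The main obstacle is the last step: globally controlling the multiplicity with which nearby $E$-leaves meet $D(x)$ in order to rule out the ``more-than-$n$-to-one'' alternative of Lemma \ref{coveringleaves}. Finiteness of $H$ is essential exactly here, because without an a priori bound on $|H|$ there is nothing preventing leaves of unbounded winding around $L_x$, and such leaves would destroy both the openness of $U$ and the injectivity of $\bar{\Phi}$.
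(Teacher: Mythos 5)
Your overall plan is the same as the paper's: realize $H := G_x|E$ as a group of homeomorphisms of a small transversal $V$, take the finite covering $\widehat{L}$ associated with $\ker\rho$, and build a map $\widehat{L}\times V\to E$ by holonomy transport along projected paths, showing it descends to a homeomorphism $\widehat{L}\times_H V\to U$. The paper's proof in the Appendix defines exactly this map (called $k$), checks well-definedness and equivariance in the same way you do, and derives openness from local compactness of $E$. So the skeleton is right.

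There is, however, a genuine gap at exactly the point you flag as ``the main obstacle.'' Your proposed resolution --- ``any intersection of $L_y$ with $D(x)$ is obtained by holonomy transport from $y$, and this $H$-orbit has size at most $|H|$'' --- is circular as stated. The premise that every point of $L_y\cap D(x)$ is reachable from $y$ by holonomy transport along a loop in $L_x$ presupposes that the leaf through $y$ stays inside the tubular neighbourhood $W_x$ long enough for the projection $p_x$ to be defined along the connecting path in $L_y$; but controlling exactly this (that $E$-leaves starting in a small $V$ do not escape $W_x$ before returning) is what has to be proved. Realizing germs as homeomorphisms $V\to V$ gives agreement of germs at $x$ only; it does not, by itself, show that transport along an \emph{arbitrary} loop $\gamma$ in $L_x$ starting from an \emph{arbitrary} $y\in V$ is globally defined and equal to $g_{\rho([\gamma])}(y)$. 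Lemma \ref{coveringleaves} is also of no direct help for that: it supplies the dichotomy but gives no mechanism to exclude its first alternative. The needed input is the pseudogroup argument (cover $L_x$ by finitely many plaques, realize all transition homeomorphisms on a uniformly small system of discs invariant under the finite holonomy pseudogroup, and then track any path plaque-by-plaque); this is precisely what the paper delegates to Proposition 4.1, III of \cite{FolCpctForm}, and it is the content of the preliminary Lemma in the Appendix. Without that, the claim that $sat(V)\cap E\subset W_x$ for small $V$, and hence the bound on $|L_y\cap V|$, is unsupported. Your closing paragraph correctly diagnoses this as the crux; the body of the argument, though, does not close it.
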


The version given above follows from the general version due to C. Ehresmann and W. Shih \cite{reebpotente}. Nonetheless, for the context discussed in this article (and probably for general\emph{ normally hyperbolic foliations} \cite{HPS}) the restricted case is enough. We will sketch the proof of the result since its derivation from Ehresmann-Shih article is not very direct. The arguments are an adaptation of Proposition 2.8 in \cite{BadSets2}. We stress here that the proof depends on the existence of a tubular neighborhoods for the leaf $L_x$. For a subset $A\subset M$ denote its saturation by $sat(A)$.

\begin{lemma}[Prop 4.1, III in \cite{FolCpctForm}]
In the hypothesis above there exist arbitrarily small neighborhoods $V\subset D_x$ where the holonomy group is represented by a group $H_V$ of homeomorphisms of $V$.
\end{lemma}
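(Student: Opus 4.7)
The strategy is to promote the germ-level description of $G_x|E$ to an actual finite group of self-homeomorphisms of a transverse neighborhood, by a finite sequence of shrinkings exploiting both the compactness of $L_x$ and the finiteness of $G_x|E$.

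\emph{Extension of generators.} Fix a finite presentation
$G_x|E = \langle [\gamma_1],\ldots,[\gamma_k] \mid w_1,\ldots,w_m\rangle$
with each $\gamma_i$ a loop in $L_x$ based at $x$. Because $L_x$ is compact and each loop has compact image, I would cover $\gamma_i([0,1])$ by finitely many foliation boxes and apply Lemma~\ref{coveringleaves} to obtain a common open disc $V_1\subset D(x)$ centered at $x$ on which every lift $h_i$ and its inverse $h_i^{-1}$ is defined as a continuous map fixing $x$. Restricted to $E$, these local homeomorphisms represent the chosen generators of $G_x|E$.

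\emph{Realization of the relations.} Enumerate the $N:=|G_x|E|$ elements of $G_x|E$ as reduced words, and for each such word $w$ let $H_w$ denote the composition of the $h_i^{\pm 1}$'s according to $w$. After finitely many further shrinkings I may assume that every $H_w$ and every intermediate composition is defined on an open disc $V_2 \subset V_1$ about $x$, since for a word of length $\ell$ the relevant domain is a finite intersection of preimages of $V_1$ under local homeomorphisms fixing $x$, hence still a neighborhood of $x$. For each relator $w_j$ one has $H_{w_j} \equiv \mathrm{id}$ as germs at $x$, and for each pair $(a,b)\in G_x|E\times G_x|E$ with product $c=a\cdot b$ one has $H_a\circ H_b\equiv H_c$ as germs at $x$. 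Each of these finitely many germ identities extends to a pointwise equality on some neighborhood of $x$ in $V_2\cap E$, so after one more finite shrinking I may assume all of them hold simultaneously on $V_2\cap E$.

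\emph{Passage to an invariant neighborhood.} Finally, set $V:=\bigcap_{w\in G_x|E} H_w(V_2)$, a finite intersection of open neighborhoods of $x$, hence itself an open neighborhood of $x$. The product identities verified in the previous step yield, for each generator $h_i$, the equality $h_i(V\cap E)=V\cap E$, so the maps $\{H_w|_{V\cap E}\}_{w\in G_x|E}$ are honest self-homeomorphisms of $V\cap E$ forming a group $H_V$ canonically isomorphic to $G_x|E$. Since $V_1$ could have been chosen arbitrarily small, the same is true of $V$.

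The main obstacle is the second step. Each generator $h_i$ extends to an open neighborhood, but the relations in $G_x|E$ are only known to hold as germs at $x$, and the composition $H_w$ along a word $w$ requires all intermediate images to lie inside a common neighborhood where the next generator is still defined. Finiteness of $G_x|E$ is the crucial ingredient, since it bounds both the number of relators and the number of composite words whose behavior must simultaneously be controlled; for foliations with an infinite holonomy group this procedure breaks down, which is precisely the phenomenon captured by the bad set $B_0$ discussed in Section~\ref{Sect. 3}.
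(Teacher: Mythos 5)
Your proof is correct and follows essentially the same strategy as the paper: exploit the finiteness of $G_x|E$ to promote a finite list of germ-level composition identities to literal pointwise identities after a finite sequence of shrinkings, and then intersect finitely many images to obtain an $H_V$-invariant transversal. The only cosmetic difference is that you route through a presentation by generators and relations, while the paper directly fixes one local representative $h_i$ for \emph{each} element of the finite group, imposes $h_j\circ h_i = h_{k(i,j)}$ after shrinking, and sets $V:=\bigcap_{i,j}\bigl(V_i\cap h_i^{-1}V_j\bigr)$, thereby avoiding the enumeration of reduced words and the verification of relator identities.
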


\begin{proof}
The fact that $L_x$ has arbitrarily small saturated sets in $E$ is classical (cf. Theorem 4.2 in \cite{FolCpct}), and depends on the fact that $E$ is locally compact. Consider a finite family of local homeomorphism $\{h_i:V_i\subset D(x)\rightarrow D(x)\}$ whose germs represent the holonomy group $G_x|E$.  For each pair on indices $i,j$ let $k=k(i,j)$ be the unique index such that $germ_x(h_k)=germ_x(h_j)\circ germ_x(h_i)$. Since we are only interested in the germs, it is not loss of generality to assume that
$$
\forall i,j,\ h_j\circ h_i|(V_i\cap h_i^{-1}V_j\cap V_k)=h_k|(V_i\cap h_i^{-1}V_j\cap V_k).
$$
Set $V:=\cap_{i,j}V_i\cap h_i^{-1}V_j, g_i:=h_i$ and observe that its saturation $U=sat(V)\cap E$ in $E$ is invariant under the group $H_V=\{g_i\}$. From this we deduce that $H_V$ is isomorphic to $G_x|E$. Clearly $V$ can be taken arbitrarily small.
\end{proof}

\begin{proof}[Reeb's Stability Theorem]
Fix $V$ as in the previous Lemma  and set $U:=sat(V)\cap E$. As explained in Section 2, the holonomy representation splits as
$$
\pi_1(L_x,x)\xrightarrow[]{\psi}H_V\subset Homeo_x(V)\rightarrow G_x|E,
$$
and if we denote by $\rho:(\widehat{L},\widehat{x})\rightarrow (L_x,x)$ the covering corresponding to $\ker{\psi}$, then $H_V$ is isomorphic to its deck transformation group. For $\alpha\in \pi_1(L_x,x)$ let $g_{\alpha}\in H_V$ the holonomy map that it determines, and for $[\alpha]\in \pi_1(L,x)/\ker(\psi)=H_V$ let
$T_{[\alpha]}$ be the covering transformation of $\widehat{L}$ that it represents. The group $H_V$ acts on the space $X=\widehat{L}\times V$ by:
$$
[\alpha]\in H_V , l\in \widehat{L},v\in V\Rightarrow  [\alpha]\cdot (l,v)= (T_{[\alpha]}^{-1}(l),g_{\alpha}(v)).
$$
The action is well defined since if $[\alpha]=[\beta]\in H_V$ then $\alpha\beta^{-1}\in \ker(\psi)$ and hence $g_{\alpha}(v)=g_{\beta}(v)$. Now if $(l,v)\in X$ we take a path $\alpha_l:[0,1]\rightarrow\widehat{L}$ such that $\alpha_l(0)=\widehat{x}, \alpha_l(1)=l$. Then $\rho(\alpha_l)$ is a path in $L$ and we lift this path using $p$ to  a path $\beta_{l,v}$ in $L_v$ with starting point $v$.

\emph{Claim}: If $\alpha'_l$ is any other path in $\widehat{L}$ joining $\widehat{x}$ and $l$, and we denote by $\beta'_{l,v}$ the corresponding path in $L_v$ obtained by the previous procedure, then $\beta_{l,v}(1)=\beta'_{l,v}(1)$.

This follows since $\alpha'_l\alpha^{-1}_l$ is a loop at $\widehat{x}$, hence $\rho(\alpha'_l\alpha^{-1})\in \ker{\psi}$.

We now define the map $k:X\rightarrow U$ by $k(l,v)=\beta_{l,v}(1)$. Using that $L$ is a nice space (in particular locally simply connected) one verifies that $k$ is continuous, and with not too much effort one can show that $k$ is surjective. Observe that for $y,z\in L_x$ the holonomy transport between $D(y)$ and $D(z)$ is an open map, and this implies that $k$ is open. Finally note that $k$ is constant precisely on orbits of the action of $H_V$, hence it induces an homeomorphism $K:\widehat{L}\times_{H_V} V\rightarrow U$. The rest of the claims follows from this.
\end{proof}

\bibliographystyle{alpha}
\bibliography{biblio}
\end{document}